\title{Classical curves via one-vertex maps}
\author{Anthony Weaver\\Department of Mathematics and Computer Science\\
Bronx Community College\\City University of New York\\
212-568-0669\\
anthonyweaver@mac.com
}
\date{May 1, 2012}
\begin{document}

\maketitle

\newtheorem{theorem}{Theorem}[section]
\newtheorem{lem}[theorem]{Lemma}
\newtheorem{prop}[theorem]{Proposition}
\newtheorem{cor}[theorem]{Corollary}
\newtheorem{conj}{Conjecture}
\newtheorem{rem}{Remark}
\newtheorem{dfn}{Definition}

\vspace*{1cm}

\begin{abstract} One-vertex maps (a type of {\it dessin d'enfant}) 
 give a uniform characterization of certain well-known algebraic curves, including those of Klein, Wiman, Accola-Maclachlan and Kulkarni.  The characterization depends on a new classification of one-vertex (dually, one-face or unicellular) maps according to the size of the group of map automorphisms.  We use an equivalence relation appropriate for studying the faithful action of the absolute Galois group on dessins, although we do not pursue that line of inquiry here.
\footnote{
{\bf 2000 Mathematics Subject Classification: }Primary 14J50, 14H57, 11G32}
\end{abstract}

\section{Introduction}

Klein's quartic curve, $w^3z + z^3 + w = 0$, admits a tesselation by 336 hyperbolic triangles, and has $\text{PSL}_2(7)$ as its full group of conformal  automorphisms  \cite{K79}.  It is characterized as the curve of smallest genus  realizing the upper bound $84(g-1)$  on the order of a group of conformal  automorphisms of a curve of genus $g>1$, given by A. Hurwitz \cite{H93} in 1893.      Around the same time, A. Wiman characterized the curves $w^2 = z^{2g+1}-1$ and $w^2=z(z^{2g}-1)$, $g>1$,   
as the unique curves of genus $g$ admitting cyclic automorphism groups of largest and second-largest  possible order ($4g+2$ and $4g$, respectively) \cite{W1895}.      Two other curve families, identified much later, share a similar characterization:  the Accola-Maclachlan and Kulkarni curves of genus $g$  realize the sharp lower bound $8g+8$ on the maximum order of an automorphism group of a curve of genus $g>1$ (\cite{A68}, \cite{M69}, \cite{K91}).   Collectively, we call these curves (together with one other exceptional curve discovered by Wiman) the {\it classical curves}.   Further details are given in Section~\ref{S:classical}.  

The triangular tesselation on the Klein quartic is an example of a {\it map}, or imbedding of a finite connected graph on a compact oriented  surface,  so that  the complement  is a union of simply connected faces.    Maps are a special type of {\it dessin d'enfant} ("child's drawing") as defined by G. Grothendieck \cite{G84}.   The topological surface underlying a map has a canonical complex structure,  making it a compact Riemann surface, and hence,  a complex algebraic curve (see, e.g., \cite{S57}, \cite{JS87}).    It turns out that any curve with a map (or dessin) admits a Bely\u\i\  function (a meromorphic function with at most three critical values) and is therefore, by Bely\u\i's theorem (\cite{B80}, \cite{K04}, \cite{W97}),  defined over an algebraic number field.    Conversely, any curve defined over a number field admits a map which is geometric in the sense that its edges are geodesics in a canonical complex structure \cite{JS96}.   The absolute Galois group acts faithfully on maps (or dessins) via its action on the coefficients of the equations defining the curves and the formulae for the  Bely\u\i\  functions \cite{JS97}.    

In this paper, we show that the classical curves can be uniformly characterized as curves admitting a strictly edge-transitive one-vertex map whose automorphisms are a proper subgroup of the full group of conformal automorphisms of the curve (see Section~\ref{SS:regular} for definitions).  The main result is Theorem~\ref{T:main}.  The paper includes three expository sections (Sections~\ref{S:combinatorics}, \ref{S:conformal} and \ref{S:extendability}) and two sections containing new results (Sections~\ref{S:onevertex} and \ref{S:largeonevertex}).      Section~\ref{S:combinatorics} gives the definition of a map and its automorphisms purely combinatorially in terms of permutation groups.  Section~\ref{S:conformal} constructs the canonical complex structure on a surface with a map, making the map geometric and its automorphisms  conformal.    Section~\ref{S:extendability} discusses conformal group actions on Riemann surfaces, and the question of whether and how  a finite cyclic action can extend to the action of a larger group.  Details on the classical curves, all of which admit large cyclic automorphism groups,  are also given there.   In Section~\ref{S:onevertex} we enumerate equivalence classes of one-vertex maps according to the size of the group of map automorphisms,  staying within the purely combinatorical framework.         In Section~\ref{S:largeonevertex} we specialize to regular and strictly edge-transitive one-vertex maps, whose automorphism groups are, respectively, maximal and second-maximal (with respect to the number of edges).   The classical curves (and some others) arise when the  second-maximal  group of map automorphisms is not the full group of conformal automorphisms of the curve (Theorem~\ref{T:main}).

One-face or unicellular maps are  the duals of one-vertex maps, and  have been studied from various points of view.   They were enumerated by means of a recurrence relation on the number of maps of a given genus  \cite{HZ86}, and more recently by a  different combinatorial identity \cite{Ch11}.   Neither enumeration gives any explicit information about the automorphism groups, as ours does.    In \cite{S01}, D. Singerman characterized Riemann surfaces admitting strictly edge-transitive  uniform unicellular dessins, a less general class of maps than the one we focus on.  One-face maps of genus $0$ (plane trees) have been extensively studied (see, e.g., \cite{JS97}, \cite{LZ04} and references therein), since they are easily visualized, and the action of the  absolute Galois group, restricted to these simple dessins, remains faithful.    The final paragraph of the paper summarizes the work most closely related to our main result.

The recent book \cite{LZ04} is an excellent general reference on dessins; see also \cite{JS96}.     The first complete exposition of the theory of maps on surfaces was given by G.A.  Jones and D. Singerman \cite{JS78}, and it remains an excellent reference.  The paper~\cite{MP98} describes an analytic approach to maps using quadratic differentials.

\section{Maps and permutation groups}\label{S:combinatorics}

A {\it map}  is  a finite connected graph embedded on a compact oriented  surface so that the complement is a union of simply connected {\it faces}.   Loops and multiple edges are allowed.   A directed edge is called a {\it dart}, symbolized by an arrowhead on the edge pointing toward one of the two incident vertices.   We usually assume that all edges carry two darts, although it is possible and sometimes useful to relax this assumption.      The set of  darts  pointing toward a given vertex receives a counter-clockwise cyclic ordering from the orientation of the ambient surface.  If the map  has $k$ edges, and the darts are labelled arbitrarily with the symbols $0,1, \dots, 2k-1$, the cyclic ordering of the darts at each vertex gives a collection of disjoint cycles in the symmetric group $S_{2k}$; we define  $x \in S_{2k}$ to be the product of these cycles.  We define $y \in S_{2k}$ to be the permutation that interchanges the dart labels on each edge.  With the assumption that all edges carry two darts, $y$ is a free involution.    The {\it monodromy group} $G_{\cal M}$ of a map ${\cal M}$ is  the subgroup $ \langle x,y \rangle \leq S_{2k}$.  By the connectedness of the underlying graph, $G_{\cal M}$ acts transitively on the darts.  
 
 Figure~\ref{F:torusmap} shows a map with $k=24$ darts imbedded on the torus  of modulus $e^{\pi i/3}$.   (The lower left corner of the parallelogram is  $0 \in {\mathbb C}$ and the upper left is $e^{\pi i /3}$.) There are $8$ vertices, $12$ edges, and $4$ hexagonal faces.  Evidently 
 \begin{gather}\notag
 x=(0\ 1\ 2)\ (3\ 4\ 5)\ (6\ 7\ 8)\ (9\ 10\ 11)\ (12\ 13\ 14)\ (15\ 16\ 17)\ (18\ 19\ 20)\ (21\ 22\ 23), \\ \notag
 y=(0\ 10)\ (1\ 17)\ (2\ 3)\ (4\ 6)\ (5\ 13)\ (7\ 23)\ (8\ 9)\ (11\ 19)\ (12\ 22)\ (14\ 15)\ (16\ 18)\ (20\ 21) \in S_{24}.
 \end{gather}
 We will revisit this example throughout the paper.
 
 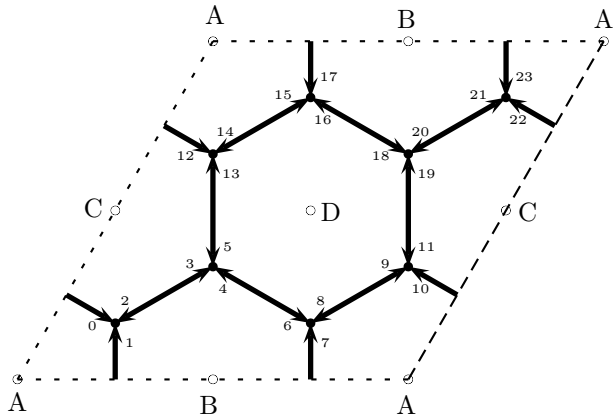
\begin{figure}
 \begin{center}
 \psset{unit=1.5}
\begin{pspicture}(0,0)(8,4)
\SpecialCoor
\degrees[6]
\rput{5.5}(0,0)
{
 

  
  
  
   \rput(1;0){\psdots[dotstyle=o,dotscale=1](1;2)}
   \rput(2;0){\psdots[dotstyle=o,dotscale=1](3;2)}
  \rput(3;0){\psdots[dotstyle=o,dotscale=1](2;2)(5;2)}
     \rput(4;0){\psdots[dotstyle=o,dotscale=1](4;2)}
   \rput(5;0){\psdots[dotstyle=o,dotscale=1](3;2)(6;2)}
      \rput(6;0){\psdots[dotstyle=o,dotscale=1](5;2)}
    \rput(7;0){\psdots[dotstyle=o,dotscale=1](7;2)}
    
    \psdots[dotstyle=*,dotscale=1](2;1)(6;1)
    
    \rput(3;0){\psdots[dotstyle=*,dotscale=1](3;2)(4;2)}
    \rput(4;0){\psdots[dotstyle=*,dotscale=1](3;2)(5;2)}
     \rput(5;0){\psdots[dotstyle=*,dotscale=1](4;2)(5;2)}
    

\rput(1;1){\psline[linestyle=dashed, dash= 2pt  5pt, dash= 2pt  5pt](0,0)(!0 3 sqrt 2 mul)}   
\rput(2;0){\rput(3;1){\psline[linestyle=dashed](0,0)(!0 3 sqrt 2 mul)}}

\rput{.5}(1;1){\psline[linestyle=dashed, dash= 2pt  5pt](0,0)(!3 sqrt 2 mul 0)} 

\rput{3.5}(7;1){\psline[linestyle=dashed, dash= 2pt  5pt](0,0)(!3 sqrt 2 mul 0)} 

\rput(2;1){\psline[linewidth=2pt, arrows=<->](0,0)(1;1)}
\rput(5;1){\psline[linewidth=2pt, arrows=<->](0,0)(1;1)}

\rput{1}(3;1){\psline[linewidth=2pt, arrows=<->](0,0)(1;1)}
\rput{5}(3;1){\psline[linewidth=2pt, arrows=<->](0,0)(1;1)}

\rput{2}(5;1){\psline[linewidth=2pt, arrows=<->](0,0)(1;1)}
\rput{4}(5;1){\psline[linewidth=2pt, arrows=<->](0,0)(1;1)}

\rput(1;0){\rput(3;1){\psline[linewidth=2pt, arrows=<->](0,0)(1;1)}}
\rput(-1;0){\rput(4;1){\psline[linewidth=2pt, arrows=<->](0,0)(1;1)}}

\rput{3}(2;1){\psline[linewidth=2pt, arrows=<-](0,0)(.5,0)}
\rput{5}(2;1){\psline[linewidth=2pt, arrows=<-](0,0)(.5,0)}
   \rput{0}(6;1){\psline[linewidth=2pt, arrows=<-](0,0)(.5,0)}
\rput{2}(6;1){\psline[linewidth=2pt, arrows=<-](0,0)(.5,0)}

\rput(1;0){\rput{0}(4;1){\psline[linewidth=2pt, arrows=<-](0,0)(.5,0)}}
\rput(3;0){\rput{3}(4;2){\psline[linewidth=2pt, arrows=<-](0,0)(.5,0)}}

\rput(4;0){\rput{5}(3;2){\psline[linewidth=2pt, arrows=<-](0,0)(.5,0)}}
\rput(4;0){\rput{2}(5;2){\psline[linewidth=2pt, arrows=<-](0,0)(.5,0)}}

}
\uput{7 pt}[3.1](2;.5){\tiny 0}
\uput{7 pt}[5.1](2;.5){\tiny 1}
\uput{7 pt}[1.1](2;.5){\tiny 2}

\uput{7 pt}[2.9](3;.5){\tiny 3}
\uput{7 pt}[4.9](3;.5){\tiny 4}
\uput{7 pt}[0.9](3;.5){\tiny 5}

\uput{7 pt}[3.1](5;.5){\tiny 18}
\uput{7 pt}[5.1](5;.5){\tiny 19}
\uput{7 pt}[1.1](5;.5){\tiny 20}

\uput{7 pt}[2.9](6;.5){\tiny 21}
\uput{7 pt}[4.9](6;.5){\tiny 22}
\uput{7 pt}[0.9](6;.5){\tiny 23}

\rput(4; .5){
\uput{7 pt}[3.1](1;4.5){\tiny 6}
\uput{7 pt}[5.1](1;4.5){\tiny 7}
\uput{7 pt}[1.1](1;4.5){\tiny 8}
\uput{7 pt}[2.9](1;1.5){\tiny 15}
\uput{7 pt}[4.9](1;1.5){\tiny 16}
\uput{7 pt}[0.9](1;1.5){\tiny 17}

\uput{7 pt}[2.9](1;5.5){\tiny 9}
\uput{7 pt}[4.9](1;5.5){\tiny 10}
\uput{7 pt}[0.9](1;5.5){\tiny 11}
\uput{7 pt}[3.1](1;2.5){\tiny 12}
\uput{7 pt}[5.1](1;2.5){\tiny 13}
\uput{7 pt}[1.1](1;2.5){\tiny 14}

}

\uput[4.5](1;.5){A}
\rput(1;.5){\uput[3](1.75;1){C}}
\rput(1;.5){\uput[1.5](3.5;1){A}}  

\uput[1.5](7;.5){A}
\rput(7;.5){\uput[0](1.75;4){C}}
\rput(7;.5){\uput[4.5](3.5;4){A}}  

\rput(7;.5){\uput[1.5](1.75;3){B}}
\rput(7;.5){\rput(1,75;3){\uput[0](1.75;4){D}}}
\rput(7;.5){\rput(1,75;3){\uput[4.5](3.5;4){B}}} 	

\end{pspicture}

 \caption{A map on a torus.  Opposite sides of the parallelogram are identified.}\label{F:torusmap}
\end{center}
\end{figure}
 
 For any map with monodromy group $\langle x, y \rangle$, the cycles  of $(xy)^{-1}=yx^{-1}$ describe closed oriented circuits (paths of darts arranged tip-to-tail) which bound the faces of the map.     To see this, start with a  dart $\delta_1$ pointing toward a given vertex $v_1$.   $x^{-1}\delta_1$ is the next dart after $\delta_1$ in the {\it clockwise} cyclic ordering of the darts pointing toward $v_1$.  Let $\delta_2 = yx^{-1}\delta_1$,   the ``reversal" of $x^{-1}\delta_1$, pointing along the same edge,  toward an adjacent vertex, $v_2$.   (It may happen that $v_2 = v_1$;  in this case the edge carrying $\delta_2$  is a loop at $v_1$).  Repeat the process staring with $\delta_2$, obtaining $\delta_3 =y x^{-1}\delta_2$,  pointing toward an adjacent vertex, $v_3$ (possibly $v_3 = v_2$ or $=v_1$).   After finitely many steps, say, $n\geq 1$ of them,   $\delta_n=\delta_1$, that is, we arrive again at the original dart.    At no step does an  edge incident with a vertex $v_i$ intervene between the edge carrying  $\delta_i$ and the edge carrying  $\delta_{i+1}$.  Thus the  circuit $\delta_1, \delta_2, \dots, \delta_{n-1}$ traverses the boundary of a unique face of ${\cal M}$, namely, the face which is on the left from the point of a view of a traveller following the circuit.   With this convention, each dart belongs to the boundary of a unique face.    For the map in Figure~\ref{F:torusmap},
 $$yx^{-1}=(0\ 3\ 13\ 22\ 20\ 11)\ (1\ 10\ 8\ 23\ 12\ 15)\ (2\ 17\ 18\ 21\ 7\ 4)\ (5\ 6\ 9\ 19\ 16\ 14) \in S_{24}.$$
  The four cycles specify the boundaries of the hexagonal faces centered at $C$, $A$, $B$, $D$, respectively.
  
 The topological genus $g$ of the surface underlying a map is easily determined from the formula for the Euler characteristic, $2g-2 =| \{\text{vertices}\}| -|\{\text{edges}\}| + |\{\text{faces}\}|$, where the number of vertices  is   the number of cycles in $x$, and the number of faces is  the number of cycles in $(xy)^{-1}$ ($1$-cycles are counted).  The number of edges is the number of $2$-cycles in a free involution on $2k$ symbols, that is, $k$.

     If there is a permutation in $S_{2k}$ which {\it simultaneously} conjugates the monodromy generators $x_1,y_1 \in G_{{\cal M}_1}$ of a map ${\cal M}_1$ to, respectively,  the monodromy generators $x_2,y_2 \in G_{{\cal M}_2}$ of another map ${\cal M}_2$,   we say that the groups $G_{{\cal M}_1}$, $G_{{\cal M}_2}$ are {\it strongly conjugate}.
     \begin{dfn} Two maps  ${\cal M}_1$, ${\cal M}_2$ are {\em equivalent} if their monodromy groups are strongly conjugate. 
       \end{dfn}
    \noindent  A map $\cal M$ is (non-trivially) equivalent to itself if and only if there is a permutation in $S_{2k}$,  not contained in $G_{\cal M}$,  which commutes with both monodromy generators and hence centralizes $G_{\cal M}$.    
     Since conjugate subgroups have conjugate centralizers, the {\it automorphism group\,} of a map,  
$$\text{Aut}({\cal M})=\text{Cent}_{S_{2k}}(G_{\cal M}) = \{\pi \in S_{2k} \mid \pi g = g \pi \text{\ for all\ } g \in G_{\cal M}\},$$
 is well-defined (up to isomorphism) on equivalence classes  of maps.   
     
            The equivalence relation we adopt is  appropriate  for studying the faithful action of the absolute Galois group on maps:   if maps from distinct equivalence classes  are in the same Galois orbit, their monodromy groups are conjugate but not strongly conjugate (see, e.g., \cite{JS97}).  We do not pursue this since we focus on maps on surfaces defined over ${\mathbb Q}$, which are fixed by the Galois action.  
              
\subsection{Regular and edge-transitive maps}\label{SS:regular}
   \begin{dfn} A map ${\cal M}$ has {\em type} $(n,r)$, if $n$ is the lcm  (least common multiple) of the vertex valencies and $r$ is the lcm of the face  valencies.  (The valence of a face is the number of darts in its bounding circuit.)  The map is {\em uniform} if all vertices have valence equal to $n$ and all faces have valence equal to $r$.
   \end{dfn} 

   The map ${\cal M}$ is called {\it regular} if $\text{Aut}({\cal M})$ acts transitively on the set $D$ of darts.    This implies that the map is uniform, and in particular,  that every dart has the same local incidence relations.     The map in Figure~\ref{F:torusmap} is uniform of type $(3,6)$.  It is also regular: $\text{Aut}({\cal M}) \simeq {\mathbb Z}_6 \ltimes ({\mathbb Z}_2 \oplus {\mathbb Z}_2)$, where  ${\mathbb Z}_2 \oplus {\mathbb Z}_2$ is the factor group generated by translations carrying $A\mapsto B$ and $A\mapsto C$, modulo the normal subgroup generated by their squares.    The study of regular maps goes back at least to Klein and Dyck, and perhaps as far as Kepler (see \cite{CM72}, Chapter 8); for more recent work see, e.g., \cite{C09}, \cite{JSW10}.
   
   A map for which $\text{Aut}({\cal M})$ is  transitive on edges (but not necessarily on darts) is called {\it edge-transitive}.  In this case there are at most two sets of local incidence relations since a dart might not be equivalent (via an automorphism) to its reversal.   Thus there are at most two distinct face-valences, and at most two distinct vertex valences.    We call a map which is edge-transitive but not regular  {\it strictly\,} edge transitive ("half-regular" in \cite{S01}). 
 

\section{Uniformization of maps and surfaces}\label{S:conformal}

  Let $G=G_{\cal M}=\langle x, y \rangle$ be  the monodromy group of a map ${\cal M}$ of type $(n,r)$.  Let $\Gamma=\Gamma(n,r)$ be the  group with presentation 
\begin{equation}\label{E:gammapres}
\Gamma(n,r)= \langle \xi_1,\xi_2, \xi_3 \mid \xi_1^n = \xi_2^2 = \xi_3^{r} = \xi_1\xi_2\xi_3=1 \rangle.
\end{equation}
There is an obvious surjective homomorphism $
\theta: \Gamma \rightarrow G$, defined by  
\begin{equation}\label{E:theta}
 \theta: \xi_1 \mapsto x, \quad \xi_2 \mapsto y,\quad \xi_3 \mapsto (xy)^{-1}.
 \end{equation}
For $\delta \in D$,  where $D$ is the set of darts of ${\cal M}$, let $G_\delta$ denote the isotropy subgroup $\{g \in G \mid \delta g = \delta\}$.  
 \begin{dfn}\label{D:mapsubgp} 
The subgroup $M = \theta^{-1}(G_\delta)\leq \Gamma$,  is called the {\rm canonical map subgroup} for ${\cal M}$.
\end{dfn}
\noindent Transitivity of the permutation group $(G, D)$  implies all $G_\delta$ are conjugate, so $M$ is well-defined (up to conjugacy) independent of $\delta$.     The {\it core} of $M$ in $\Gamma$ is the normal subgroup
$$M^\ast = \bigcap_{\gamma \in \Gamma}\gamma^{-1}M\gamma.$$
$\Gamma/M^\ast$ acts on the set $| \Gamma/M |$ of  cosets  $M$ in $\Gamma$, as follows:   
 $M^*\gamma_1:  M\gamma \mapsto M\gamma\gamma_1 $, $ \gamma,\gamma_1 \in \Gamma.$  
 One may verify that the action is well-defined, faithful and transitive.

 \begin{lem}\label{L:mapsubgroup}
 The permutation groups $(G, D)$ and $(\Gamma/M^*,| \Gamma/M |)$ are isomorphic.
\end{lem}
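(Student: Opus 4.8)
The plan is to factor the desired permutation isomorphism through the coset space of a point stabilizer, using the orbit--stabilizer correspondence on the $(G,D)$ side and the homomorphism $\theta$ on the $\Gamma$ side. First I would record the standard fact that, since $(G,D)$ is transitive, the assignment $G_\delta g \mapsto \delta g$ is a well-defined $G$-equivariant bijection from the right coset space $G_\delta\backslash G$ onto $D$, where $G$ acts on cosets by right multiplication. This exhibits a permutation isomorphism $(G,D)\cong(G,\,G_\delta\backslash G)$, so it suffices to produce a permutation isomorphism $(\Gamma/M^*,\,|\Gamma/M|)\cong(G,\,G_\delta\backslash G)$.

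Next I would build the two ingredients of that isomorphism out of $\theta$. Writing $K=\ker\theta$, note that $\theta$ is surjective and $M=\theta^{-1}(G_\delta)\supseteq K$, so $\theta(M)=G_\delta$. I claim $M\gamma\mapsto G_\delta\,\theta(\gamma)$ is a well-defined bijection $|\Gamma/M|\to G_\delta\backslash G$: well-definedness and injectivity both reduce to the equivalence $\gamma_1\gamma_2^{-1}\in M \iff \theta(\gamma_1)\theta(\gamma_2)^{-1}\in G_\delta$, and surjectivity is immediate from surjectivity of $\theta$.

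The crux is to identify the kernel of the action, i.e.\ to show $M^*=K$. The inclusion $K\subseteq M^*$ is clear, since $K$ is normal in $\Gamma$ and contained in $M$, hence contained in the largest such normal subgroup, namely $M^*$. For the reverse inclusion I would push $M^*$ through $\theta$: using surjectivity, $\theta(M^*)\subseteq\bigcap_{\gamma\in\Gamma}\theta(\gamma)^{-1}G_\delta\,\theta(\gamma)=\bigcap_{g\in G} g^{-1}G_\delta g$, which is the core of $G_\delta$ in $G$. But this core is exactly the kernel of the action $(G,\,G_\delta\backslash G)$, equivalently of $(G,D)$, and the latter is \emph{faithful} because $G\leq S_{2k}$ by construction. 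Hence $\theta(M^*)=\{1\}$, so $M^*\subseteq K$ and therefore $M^*=K$. This step --- where faithfulness of the original dart action forces the core of $G_\delta$ to be trivial --- is the real content, and the reason the construction must pass to the core $M^*$ rather than to $M$ itself; it is the point I would treat most carefully.

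Finally, $M^*=K$ lets $\theta$ descend to a group isomorphism $\Gamma/M^*\to G$, $M^*\gamma\mapsto\theta(\gamma)$, and it remains only to check that this isomorphism intertwines the two right-multiplication actions with the coset bijection of the second paragraph. This is a one-line verification: the image of $M\gamma$ under $M^*\gamma_1$ is $M\gamma\gamma_1$, which maps to $G_\delta\,\theta(\gamma)\theta(\gamma_1)$, exactly the image $G_\delta\,\theta(\gamma)$ of $M\gamma$ acted on by $\theta(\gamma_1)$. Chaining this permutation isomorphism with the orbit--stabilizer bijection $(G,D)\cong(G,\,G_\delta\backslash G)$ yields the asserted isomorphism $(G,D)\cong(\Gamma/M^*,\,|\Gamma/M|)$.
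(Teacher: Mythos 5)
Your proof is correct and follows essentially the same route as the paper's: both identify $\Gamma/M^\ast$ with $G$ via $\theta$ (using $M^\ast=\ker\theta$) and match right cosets of $M$ with darts through the stabilizer $G_\delta$, your explicit factoring through $G_\delta\backslash G$ being only a cosmetic reorganization of the paper's direct bijection $b:\delta g\mapsto M\theta^{-1}(g)$. The one point where you go beyond the paper is welcome: the paper merely says ``one first verifies that $M^\ast=\ker(\theta)$,'' whereas you actually prove it, correctly locating the crux in the faithfulness of the dart action $G\leq S_{2k}$, which forces the core of $G_\delta$ in $G$ to be trivial.
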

\begin{proof} It suffices to verify that the diagram \begin{equation}\notag
\begin{matrix} G &\times& D& \rightarrow & D \\
\overset{i \big\downarrow}{\ } & & \overset{b \big\downarrow}{\ } && \overset{b \big\downarrow}{\ } \\
\displaystyle{\frac{\Gamma}{M^\ast}} &\times & \displaystyle{\left| \frac{\Gamma}{M} \right|}&\rightarrow  &\displaystyle{\left| \frac{\Gamma}{M} \right|}
\end{matrix}
\end{equation}
is commutative, where arrows denote group actions, and the maps $i$ and $b$ are defined as follows:  $i: G \rightarrow \Gamma/M^\ast$ maps $g \mapsto M^\ast\theta^{-1}(g)$;   for fixed but arbitrary $\delta \in D$,   $b: D \rightarrow | \Gamma/M |$  maps  $b:\delta g \mapsto   M\theta^{-1}(g)$, $g \in G$.  To show that $i$  is a well-defined isomorphism, one first verifies that $M^\ast = \text{ker}(\theta)$; then $i$ is simply the inverse of the canonical isomorphism $\Gamma / \text{ker}(\theta) \rightarrow G$.  $b$ is a well-defined bijection according to the following argument, whose steps are reversible:    for $g_1, g_2 \in G$, $\delta g_1 =\delta  g_2  \iff g_2g_1^{-1} \in G_\delta$ $\iff \theta^{-1}(g_2g_1^{-1}) \subseteq M$  $\iff M\theta^{-1}(g_2)(\theta^{-1}(g_1))^{-1} = M$ $\iff M\theta^{-1}(g_1) = M\theta^{-1}(g_2)$.  
\end{proof}
 
 \subsection{Groups with signature}  
 
 The map subgroup $M$, and its overgroup $\Gamma$,  are examples of  {\it groups with signature}.  These groups act  properly discontinuously by conformal isometries on  one of the three simply connected Riemann surfaces:   the Riemann sphere (${\mathbb P}^1$), the complex plane (${\mathbb C}$), or the Poincar\'e upper half plane (${\mathbb H}$).  This fact allows us to obtain a canonical complex structure on the topological surface underlying a map.

A group $\Lambda=\Lambda(\sigma)$ with {\it signature} $\sigma=(h; r_1, r_2, \dots, r_s)$ has presentation
\begin{equation}\label{E:sigpres}
\Lambda(\sigma)=\langle a_1, b_1, \dots, a_h, b_h, \xi_1, \dots \xi_s \mid \xi_1^{r_1} = \dots = \xi_s^{r_s} = \prod_{i=1}^ha_i^{-1}b_i^{-1}a b \prod_{j=1}^s \xi_j =1 \rangle,   
\end{equation}  
and acts by conformal isometries on  ${\mathbb P}^1$,  ${\mathbb C}$, or  ${\mathbb H}$,  depending on whether the number
$$\mu(\sigma) = 2h-2 + \sum_{i=1}^s\bigl(1-\frac{1}{r_i}\bigr)$$
is, respectively, negative, $0$, or positive.      If $s>0$, the $r_i$  are called the {\it periods} of  $\Lambda$.  All periods are $>1$, and the number of occurrences of a given period is the number of conjugacy classes of elements of that order in $\Lambda$.  Signatures which differ only by a permutation of the periods determine isomorphic groups and are considered the same.   If $s=0$, $\Lambda$ is a torsion-free {\it surface group}, with signature $(h; -)$, isomorphic to the fundamental group of a surface of genus $h$.
  If $\Delta$  is any subgroup of finite index $n$ in $\Lambda(\sigma)$,  then $\Delta$ is also a group with signature $\sigma'$ and   the {\it Riemann-Hurwitz relation} states that  $\mu(\sigma') = n \mu(\sigma)$.

  The orbit space  $\,{\cal U}/\Lambda$, where $\,{\cal U}$ denotes ${\mathbb P}^1$,  ${\mathbb C}$, or  ${\mathbb H}$ as appropriate,  is a compact Riemann surface of genus $h$ with $s$ distinguished  points, over which the projection $\,{\cal U} \rightarrow {\cal U}/\Lambda$ branches.    The complex structure on ${\cal U}/\Lambda$ is the one which makes the branched covering map $\,{\cal U} \rightarrow {\cal U}/\Lambda$ holomorphic.     The uniformization theorem of Klein, Poincar\'e and Koebe states that  every compact surface of genus $h$ can be  obtained in this way; moreover one may choose the uniformizing group $\Lambda$ to be a surface group, so that there are no distinguished points, and the covering is regular.    
  
    When $\mu(\sigma)>0$, which is the case except for finitely many signatures, $\Lambda(\sigma)$ is a co-compact {\it Fuchsian group} (discrete subgroup of $\text{PSL}(2,{\mathbb R})$ containing no parabolic elements) and $2\pi\mu(\sigma)$ is the hyperbolic area of a fundamental domain for the action of $\Lambda(\sigma)$ on ${\mathbb H}$.   The normalizer $N_{\text{PSL}(2,{\mathbb R})}(\Lambda)$ of a Fuchsian group $\Lambda$  is itself a Fuchsian group, containing $\Lambda$ with finite index.  
    \begin{lem}\label{L:confaut} The automorphism group of  $\,{\mathbb H}/\Lambda$  is isomorphic to the finite quotient group $N_{\text{PSL}(2, {\mathbb R})}(\Lambda)/\Lambda$.
    \end{lem}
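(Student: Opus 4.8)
The plan is to realize every conformal automorphism of $X := \mathbb{H}/\Lambda$ by a Möbius transformation of the universal cover $\mathbb{H}$, and then to identify exactly which such transformations descend to $X$. Write $\pi\colon \mathbb{H}\to X$ for the quotient projection, so that $\Lambda$ is the group of deck transformations of $\pi$, and abbreviate $N(\Lambda) := N_{\text{PSL}(2,\mathbb{R})}(\Lambda)$. The heart of the argument is a map $\Phi\colon \operatorname{Aut}(X)\to N(\Lambda)/\Lambda$ assigning to each automorphism the coset of its lift; I would show $\Phi$ is a well-defined isomorphism, and then read off finiteness from the fact already recorded above, that $N(\Lambda)$ is Fuchsian and contains $\Lambda$ with finite index.

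First I would lift. Given $f\in\operatorname{Aut}(X)$, the composite $f\circ\pi\colon\mathbb{H}\to X$ is again a covering of $X$, and since $\mathbb{H}$ is simply connected the lifting criterion produces a continuous $\tilde f\colon\mathbb{H}\to\mathbb{H}$ with $\pi\circ\tilde f = f\circ\pi$; applying the same to $f^{-1}$ shows $\tilde f$ is a homeomorphism. Because $\pi$ is a local biholomorphism and $f$ is conformal, locally $\tilde f = \pi^{-1}\circ f\circ\pi$ is conformal, so $\tilde f\in\operatorname{Aut}(\mathbb{H}) = \text{PSL}(2,\mathbb{R})$. The relation $\pi\circ\tilde f = f\circ\pi$ forces $\tilde f$ to permute the fibres of $\pi$, i.e. the $\Lambda$-orbits: for each $\lambda\in\Lambda$ there is a $\mu\in\Lambda$ with $\tilde f\lambda = \mu\tilde f$, whence $\tilde f\Lambda\tilde f^{-1}\subseteq\Lambda$ and, symmetrically, $\tilde f\in N(\Lambda)$. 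Two lifts of the same $f$ differ by a deck transformation, so $\Phi(f) := \tilde f\Lambda$ is a well-defined element of $N(\Lambda)/\Lambda$.

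That $\Phi$ is an isomorphism is then routine. Taking $\tilde f\circ\tilde g$ as a lift of $f\circ g$ shows $\Phi$ is a homomorphism; it is injective because $\tilde f\in\Lambda$ makes $f$ act trivially on orbits; and it is surjective because any $n\in N(\Lambda)$ carries $\Lambda$-orbits to $\Lambda$-orbits (as $n\lambda = (n\lambda n^{-1})n$ with $n\lambda n^{-1}\in\Lambda$) and hence descends to a conformal automorphism of $X$ whose $\Phi$-image is $n\Lambda$. Finiteness of $\operatorname{Aut}(X)\cong N(\Lambda)/\Lambda$ is then immediate from the cited property of the normalizer.

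The main obstacle is the lifting step when $\Lambda$ has periods, for then $\pi$ is branched over the cone points and $\mathbb{H}$ is only the \emph{orbifold} universal cover, so the lifting criterion cannot be applied on all of $\mathbb{H}$ at once. I would circumvent this by deleting the discrete set of elliptic fixed points together with its image, lifting $f$ over the resulting honest unramified covering of the punctured surface (here one uses that a conformal automorphism respects the cone-point structure), and then extending the bounded holomorphic lift across the deleted points by the removable-singularity theorem. In the torsion-free surface-group case this difficulty evaporates and the argument above is exactly covering-space theory combined with the conformality of lifts.
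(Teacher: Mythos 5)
Your covering-space argument is the standard proof of this lemma; the paper itself gives no proof at all (it defers to \cite{JS78}, \cite{JS87}, \cite{SK92}), and what you wrote is essentially the argument found in those references. For $\Lambda$ a torsion-free surface group your proof is complete and correct: lifts of conformal automorphisms land in $\mathrm{PSL}(2,\mathbb{R})$ and normalize the deck group, the coset map $\Phi$ is a well-defined isomorphism, and finiteness follows from the finite index of $\Lambda$ in its Fuchsian normalizer, which the paper records just before the lemma.

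The trouble is your last paragraph, and it is worth pinpointing, because the parenthetical step ``one uses that a conformal automorphism respects the cone-point structure'' is not something you can prove: read literally --- with $\mathrm{Aut}(\mathbb{H}/\Lambda)$ the conformal automorphism group of the underlying compact Riemann surface --- it is false, and so is the lemma, when $\Lambda$ has periods. For a concrete failure, let $X$ be any genus-$2$ surface, let $p$ be a point moved by some conformal automorphism $f$ of $X$ (such $p$ exist, since a nontrivial automorphism fixes only finitely many points), and let $\Lambda$ of signature $(2;3)$ uniformize the orbifold with underlying surface $X$ and one cone point of order $3$ at $p$. Then $\mathbb{H}/\Lambda\cong X$ as Riemann surfaces, but $f$ does not lift to $N(\Lambda)$: elements of $N(\Lambda)$ permute the elliptic fixed points of $\Lambda$, so every automorphism in the image of $N(\Lambda)/\Lambda\hookrightarrow\mathrm{Aut}(X)$ must fix $p$. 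Hence $N(\Lambda)/\Lambda\subsetneq\mathrm{Aut}(X)$. The lemma is correct for $\Lambda$ with torsion only when $\mathrm{Aut}(\mathbb{H}/\Lambda)$ means orbifold automorphisms, i.e.\ those permuting the cone points and preserving their orders; with that reading your parenthetical becomes a definition rather than an assertion, and your puncture-and-extend scheme does work, provided you also address one suppressed point: after deleting the elliptic fixed points, $\mathbb{H}^{\ast}$ is no longer simply connected, so the lift of $f$ exists only because $f_{\ast}$ preserves the subgroup $(\pi^{\ast})_{\ast}\pi_1(\mathbb{H}^{\ast})\trianglelefteq\pi_1(X^{\ast})$, namely the normal closure of the $r_i$-th powers of small loops about the cone points --- and verifying this invariance is exactly where preservation of cone points and their orders gets used. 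The distinction is not academic for this paper: canonical map subgroups of non-uniform maps do have torsion (the strictly edge-transitive maps of Theorem~\ref{T:edgetransaction} have two distinct face-valences), although Lemma~\ref{L:mapaut} only needs the containment $N_\Gamma(M)/M\leq\mathrm{Aut}(X)$, which holds under either reading.
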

 For proofs and further details, see, e.g.,  \cite{JS78},  \cite{JS87}, \cite{SK92}.

\subsection{The canonical Riemann surface of a map}\label{SS:universalmaps}

Let $M \leq \Gamma=\Gamma(n,r)$ be the canonical map subgroup of a map ${\cal M}$ of type $(n,r)$.     Since $M$ is a group with signature, the quotient space 
$\,{\cal U}/M$  is a compact Riemann surface, known as the {\em canonical Riemann surface} for ${\cal M}$.  The canonical Riemann surface contains  the map ${\cal M}$ {\it geometrically},  that is,  the edges of ${\cal M}$ are geodesics, faces are regular geodesic polygons,  and face-centers are well-defined points.   To see this,  we pass to the {\it universal map} $\hat{\cal M}_{n,r}$ of type $(n,r)$.  $\hat{\cal M}_{n,r}$ is a regular map  imbedded on one of  the simply connected Riemann surfaces $\,{\cal U}$ $={\mathbb P}^1$,  ${\mathbb C}$, or  ${\mathbb H}$.  It can be constructed as follows.  
Let $T \in {\cal U}$ be a geodesic triangle with vertices $a,b,c$,  at which the  interior angles are $\pi/n$, $\pi/2$, $\pi/r$ respectively (see Figure~\ref{F:universalmap}).   Reflections in the sides of $T$ generate a discrete group of isometries of $\,{\cal U}$ whose sense-preserving subgroup  (of index $2$) is isomorphic to $\Gamma=\Gamma(n,r)$.    The generators $\xi_1$, $\xi_2$, $\xi_3$ $\in \Gamma$ correspond to the rotations about  $a$, $b$, $c$ through angles of, respectively,  $2\pi/n$, $\pi$, $2\pi/r$;  the product of the three rotations  (considered as a product of six side reflections)  is easily seen to be trivial.    Let ${\bf \hat e}$ be the geodesic segment   $\overline{ab} \cup \xi_2(\overline{ab})$, directed toward the vertex $a$.  The map with dart set   $\hat D=\{\gamma({\bf \hat e}) \mid \gamma \in \Gamma\}$ and vertex set    $\hat V=\{\gamma(a) \mid \gamma \in \Gamma\}$ is the universal map $\hat{\cal M}_{n,r}$.  
   Darts meet in sets of $n$ at each vertex and the angle between any two consecutive edges is  $2\pi/n$.   The faces are regular geodesic $r$-gons centered at the $\Gamma$-images of $c$.      
 The permutation group $(\Gamma, \hat D)$ is isomorphic to $(\Gamma,  |\Gamma|)$, the  right regular representation of $\Gamma$ on itself.  Thus the map is regular and $\text{Aut}(\hat{\cal M}_{n,r}) = \text{Aut}(\Gamma, |\Gamma |) = \Gamma$.  
 
  If the number $\dfrac 12 - \dfrac 1n -\dfrac 1r$    is negative, $0$, or positive, $\hat{\cal M}_{n,r}$ lies on ${\mathbb P}^1$,  ${\mathbb C}$, or  ${\mathbb H}$, respectively.  In the negative case, $\hat{\cal M}_{n,r}$ is a  central projection of one of the Platonic solids onto the circumscribed sphere; in the positive case, it is one of infinitely many regular tesselations of the hyperbolic plane.  In the $0$ case, it is a tesselation of ${\mathbb C}$ by squares, hexagons, or equilateral triangles.  (Figure~\ref{F:torusmap} is a portion of $\hat{\cal M}_{3,6}$.)   See, e.g., \cite{CM72}.

 \begin{figure}
 \begin{center}
 \psset{unit=1.4}
 \begin{pspicture}(0,.5)(7,4)

\psarc(-3,0){5}{22.4}{37}
\psarc(1,0){2}{72}{90}
\psline(1,2)(1,3)
\uput[90](1,3){a}
\uput[180](1,2){b}
\uput[315](1.6, 1.9){c}

\uput[0](.8, 2.6){ \tiny $\dfrac{\pi}{n}$}
\uput[0](1.1,2.15){ \tiny $\dfrac{\pi}{r}$}
\pspolygon(1,2)(1.1,2)(1.1,2.1)(1, 2.1)

\uput[0](1.1, 1.4){$T$}

\psarc[linestyle=dashed](0,0){5}{22.4}{37}
\psarc[linestyle=dashed](4,0){2}{72}{90}
\psline[linewidth=1.5pt, arrows=->](4,1)(4,3)

\uput[90](4,3){a}
\uput[315](4.6, 1.9){c}
\uput[180](4,1){a$\xi_2$}

\psarc[linewidth=1.5pt, arrows=->](4,.5){2.5}{37}{90}
\uput[315](6, 2){a$\xi_2\xi_1$}


\uput[0](3.8, 2.6){ \tiny $\dfrac{\pi}{n}$}

\uput[0](4.15, 2.7){ \tiny $\dfrac{\pi}{n}$}

\pspolygon(4,2)(4.1,2)(4.1,2.1)(4, 2.1)

\uput[0](3.5,2){$\bf \hat e$}

\uput[0](5.2,2.8){${\bf \hat e} \xi_1$}

\psdots(4,1)(4,3)(6,2)

\psdots[dotstyle=o,dotscale=1](4.6, 1.9)
\end{pspicture}
\caption{Construction of $\hat{\cal M}_{n,r}$, the universal map of type $(n,r)$}\label{F:universalmap}
\end{center}
\end{figure}
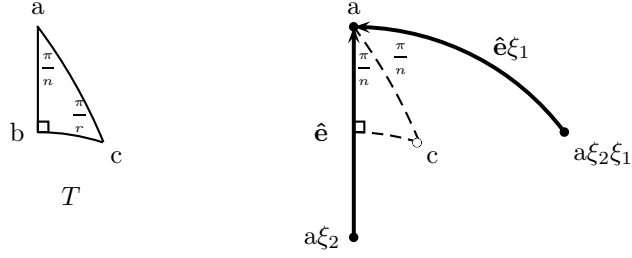

  The orbifold map ${\cal U} \rightarrow {\cal U}/M$ is a local isometry.   Hence    
the image of $\hat D$ in the canonical Riemann surface $\,{\cal U}/M$  is  a set of directed geodesic segments.  These segments are in bijection with the set $|\Gamma/M|$ of right cosets of $M$ in $\Gamma$, and   $\Gamma$ acts by right multiplication on $|\Gamma/M|$.   If $\gamma \in \Gamma$ fixes {\it every} coset, it belongs to  the core $M^\ast$ of $M$ in $\Gamma$.   Hence the universal map $\hat{\cal M}_{n,r}$ projects to a map $\,\hat {\cal M}_{n,r}/M$ on $\,{\cal U}/M$ -- the promised geometric map -- with  monodromy group    $(\Gamma/M^*,| \Gamma/M |)$.   With Lemma~\ref{L:mapsubgroup}, we conclude that {\it every} map of type $(n,r)$ is a quotient of the universal map $\,\hat{\cal M}_{n,r}$,  by a subgroup $M$ of finite index in the group $\Gamma=\Gamma(n,r)$, and may be assumed to lie on its canonical Riemann surface.

The general theory of covering spaces specializes to the category of maps of type $(n,r)$.  Thus, if ${\cal M}_1$ and ${\cal M}_2$ are two  finite  maps with canonical map subgroups $M_1, M_2 \leq \Gamma=\Gamma(n,r)$, then ${\cal M}_1$ is a finite covering of ${\cal M}_2$ if and only if  $M_1$ is conjugate within $\Gamma$ to a subgroup of finite index in $M_2$.     
   In particular, if $M_2=N_\Gamma(M_1)$, the normalizer of $M_1$ in $\Gamma$, then $\text{Aut}({\cal M}_1) \simeq M_2/M_1$.  This leads to the following Lemma.    

   \begin{lem}\label{L:mapaut} Let ${\cal M}$ be a map of type $(n,r)$ with canonical map subgroup $M \leq \Gamma = \Gamma(n,r)$.  Then $\text{Aut}({\cal M}) \leq \text{Aut}(X)$, where $X ={\cal U}/M$ is the canonical Riemann surface for ${\cal M}$,  and $\text{Aut}(X)$ is the group of conformal automorphisms of $X$.
    \end{lem}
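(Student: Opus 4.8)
The plan is to identify both $\text{Aut}({\cal M})$ and $\text{Aut}(X)$ with quotients by $M$ of two nested normalizers, and then to read off the desired inclusion from the containment of $\Gamma$ in $\text{PSL}(2,{\mathbb R})$. I will work in the hyperbolic case ${\cal U} = {\mathbb H}$; the spherical and Euclidean cases are identical once $\text{PSL}(2,{\mathbb R})$ is replaced by the appropriate group of conformal automorphisms of ${\mathbb P}^1$ or ${\mathbb C}$.

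First I would invoke the covering-space correspondence established just before the statement: taking $M_1 = M$ and $M_2 = N_\Gamma(M)$, it gives $\text{Aut}({\cal M}) \simeq N_\Gamma(M)/M$, where $N_\Gamma(M)$ denotes the normalizer of $M$ in $\Gamma$. (Since $M$ is defined only up to conjugacy, so is $N_\Gamma(M)$, but the quotient is well-defined up to isomorphism, matching the fact that $\text{Aut}({\cal M})$ is well-defined on equivalence classes.) Concretely, $nM \in N_\Gamma(M)/M$ acts on $|\Gamma/M|$ by $M\gamma \mapsto Mn\gamma$, a well-defined action precisely because $n$ normalizes $M$, and it commutes with the right $\Gamma$-action; under Lemma~\ref{L:mapsubgroup} this is exactly the centralizing action defining $\text{Aut}({\cal M}) = \text{Cent}_{S_{2k}}(G_{\cal M})$.

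Next I would recall, from the universal-map construction of Section~\ref{SS:universalmaps}, that $\Gamma = \Gamma(n,r)$ is realized as the sense-preserving subgroup of the group generated by reflections in the sides of $T$, hence as a subgroup of $\text{PSL}(2,{\mathbb R})$, with $M \leq \Gamma$ a Fuchsian group and $X = {\mathbb H}/M$. Lemma~\ref{L:confaut} then gives $\text{Aut}(X) \simeq N_{\text{PSL}(2,{\mathbb R})}(M)/M$. Because $\Gamma \leq \text{PSL}(2,{\mathbb R})$, one has
\begin{equation}\notag
N_\Gamma(M) = \Gamma \cap N_{\text{PSL}(2,{\mathbb R})}(M) \leq N_{\text{PSL}(2,{\mathbb R})}(M);
\end{equation}
since $M$ is normal in each, this inclusion descends to an injective homomorphism $N_\Gamma(M)/M \hookrightarrow N_{\text{PSL}(2,{\mathbb R})}(M)/M$ (a coset maps to the identity only if its representative already lies in $M$). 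Composing the two isomorphisms with this injection yields $\text{Aut}({\cal M}) \leq \text{Aut}(X)$. Geometrically, the image consists of exactly those conformal automorphisms of $X$ induced by elements of $\Gamma$, that is, those preserving the geometric map $\hat{\cal M}_{n,r}/M$.

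The step I expect to require the most care is the uniform treatment of the three geometries, since Lemma~\ref{L:confaut} is stated only for ${\mathbb H}$. For the finitely many spherical and Euclidean types I would need the analogous identification of $\text{Aut}(X)$ with the normalizer quotient of $M$ inside the full conformal automorphism group of ${\cal U}$. In every case $\Gamma$ embeds in that group via the same triangle-reflection construction, so the inclusion of normalizers — and hence the conclusion — goes through verbatim; the genuine content of the lemma is simply that map automorphisms, being induced by the subgroup $\Gamma$ of all isometries of ${\cal U}$, form a subgroup of the full group of conformal automorphisms of $X$.
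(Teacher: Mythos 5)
Your proof is correct and is essentially the paper's own argument: both identify $\text{Aut}({\cal M})$ with $N_\Gamma(M)/M$ via the covering-space correspondence, then embed this quotient in $N_{\text{Isom}^+({\cal U})}(M)/M = \text{Aut}(X)$ using Lemma~\ref{L:confaut}, with the same caveat about the non-hyperbolic cases. The only difference is that you spell out the injectivity of the induced map on quotients and the intersection formula $N_\Gamma(M) = \Gamma \cap N_{\text{Isom}^+({\cal U})}(M)$, which the paper leaves implicit.
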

  \begin{proof}   $\text{Aut}({\cal M})$ is isomorphic to  $N_\Gamma(M)/M$.  
 Since $\Gamma$, $M$ and $N_\Gamma(M)$ are discrete subgroups of $\text{Isom}^+({\cal U})$, the sense-preserving isometries of  ${\,\cal U}$,  $$ \frac{N_\Gamma(M)}{M} 
\leq \frac{N_{\text{Isom}^+({\cal U})}(M)}{M} =\text{Aut}({\,\cal U}/{M}).$$ 
The equality on the right is a consequence of Lemma~\ref{L:confaut} if ${\cal U}={\mathbb H}$.
  \end{proof}
  
      In the local geometry of the canonical Riemann surface, an automorphism $\alpha \in \text{Aut}({\cal M})$ which  preserves a  face acts   as a rotation about the fixed face-center through an angle $2\pi/r'$, where $r'$ is a divisor of the face-valence (and a divisor of $r$ if the type is $(n,r)$).     Similarly, if $\alpha$  preserves an edge while transposing its darts, the geometric action is a rotation through an angle $\pi$ about the fixed edge midpoint.     These are consequences  of corresponding facts about the  universal map $\hat{\cal M}_{n,r}$,  together with the fact that the covering projection ${\cal U} \rightarrow {\cal U}/M$ is a local isometry.  See \cite{JS78}.

 \section{One-vertex maps with automorphisms}\label{S:onevertex}
 
 We now enumerate one-vertex maps, up to equivalence, according to the size of their automorphism groups.

 Let ${\cal M}$ be a map with one vertex and $k$  edges.    The monodromy generator $x \in S_{2k}$  is a  $2k$-cycle specifying the counterclockwise cyclic ordering of the  darts (arbitrarily labelled $0,1,2, \dots, 2k-1$) around the vertex.  The other monodromy generator, $y\in S_{2k}
 $, is a free involution whose  $k$ disjoint transpositions specify how the darts pair off into edges.   
Since all $2k$-cycles are conjugate in $S_{2k}$,   we may assume, up to map equivalence,  that $x$ is the standard $2k$-cycle
\begin{equation}\label{E:standardcycle}
(0\ 1\ 2\   \dots\  2k-1).
\end{equation}
Then ${\cal M}$  is determined  
by the free involution $y$, 
which can be chosen in 
$$\prod_{i=0}^{k-1}\begin{pmatrix} 2k-2i \\2 \end{pmatrix} =  (2k-1)\cdot (2k-3)\cdot \dots \cdot 3 \cdot 1 = (2k-1)!!$$ 
ways.   Figure~3 shows three of the $15$ one-vertex maps with three edges.  

The number of equivalence classes of one-vertex maps with $k$ edges is smaller than $(2k-1)!!$,  since some of the maps have non-trivial automorphisms.   

\begin{theorem}\label{T:MapAuts} Let ${\cal M}$ be a one-vertex map with $k$ edges and monodromy group $G_{\cal M} = \langle x,y \rangle$,  where $x$ is a $2k$-cycle and  and $y$ is a free involution.  Then:  
(i) $\text{Aut}({\cal M}) \leq \langle x \rangle \simeq {\mathbb Z}_{2k}.$   (ii) If $\text{Aut}({\cal M})=\langle x^p \rangle$, $p$ a divisor of $2k$,  the monodromy groups
$\langle x, y_s \rangle,$  where $y_s=x^{-s}yx^s$, $s=0, \dots p-1$, determine distinct but equivalent maps.  Conversely, if ${\cal M}$ is equivalent to ${\cal M}^\prime$ with monodromy group $G_{{\cal M}^\prime }= \langle x, y^\prime \rangle$,  then $\text{Aut}({\cal M}) \simeq \text{Aut}({\cal M}^\prime) = \langle x^p \rangle$ for some divisor $p$ of $2k$, and $y^\prime = y_s$ for some $s$, $0 \leq s \leq p-1$.
 \end{theorem}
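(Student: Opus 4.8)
The plan is to reduce everything to one structural fact: the centralizer of a full $2k$-cycle in $S_{2k}$ is exactly the cyclic group it generates. For part (i), since $\text{Aut}(\mathcal{M}) = \text{Cent}_{S_{2k}}(G_{\mathcal{M}})$ and $x \in G_{\mathcal{M}}$, every automorphism centralizes $x$; because $x$ is a single $2k$-cycle, its centralizer in $S_{2k}$ has order $2k$ and therefore equals $\langle x \rangle$. Hence $\text{Aut}(\mathcal{M}) \leq \langle x \rangle \simeq \mathbb{Z}_{2k}$, and being a subgroup of a cyclic group it has the form $\langle x^p \rangle$ for a unique divisor $p$ of $2k$. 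This both establishes (i) and fixes the meaning of $p$.

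The engine for part (ii) is the observation that, since every power of $x$ already commutes with $x$, a power $x^j$ lies in $\text{Aut}(\mathcal{M})$ if and only if it commutes with $y$, i.e. if and only if $y = x^{-j} y x^{j} = y_j$. Thus $\{\, j : y_j = y \,\} = \{\, j : x^j \in \text{Aut}(\mathcal{M}) \,\} = \{\, j : p \mid j \,\}$, which simultaneously shows that $y_s$ depends only on $s \bmod p$ and that $y_0, \dots, y_{p-1}$ are pairwise distinct free involutions (as $y_s = y_{s'} \iff p \mid (s - s')$). For the equivalence I would exhibit the conjugator explicitly: $\pi = x^{-s}$ commutes with $x$ and satisfies $\pi y \pi^{-1} = x^{-s} y x^{s} = y_s$, so $\pi$ simultaneously carries $(x, y)$ to $(x, y_s)$, and the maps with monodromy groups $\langle x, y_s \rangle$ are therefore all equivalent to $\mathcal{M}$. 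This proves the forward half of (ii).

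For the converse, suppose $\mathcal{M}$ is equivalent to $\mathcal{M}'$ with $G_{\mathcal{M}'} = \langle x, y' \rangle$. Strong conjugacy entails ordinary conjugacy of $G_{\mathcal{M}}$ and $G_{\mathcal{M}'}$, so their centralizers are conjugate and $\text{Aut}(\mathcal{M}) \simeq \text{Aut}(\mathcal{M}')$. By part (i) both groups are subgroups of the cyclic group $\langle x \rangle$; since a cyclic group has exactly one subgroup of each order, two isomorphic subgroups of equal order must coincide, forcing $\text{Aut}(\mathcal{M}') = \langle x^p \rangle$. Finally, because $\mathcal{M}$ and $\mathcal{M}'$ share the first generator $x$, any conjugator $\pi$ realizing the equivalence must fix $x$, hence lies in $\text{Cent}_{S_{2k}}(x) = \langle x \rangle$ and equals $x^t$ for some $t$; then $y' = x^t y x^{-t} = y_{-t}$, and choosing $s \in \{0, \dots, p-1\}$ with $s \equiv -t \pmod{p}$ gives $y' = y_s$ by the periodicity established above.

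I expect the only delicate point to be the bookkeeping in the converse: one must use that the shared generator $x$ forces $\pi$ into $\langle x \rangle$, and then combine the bare isomorphism $\text{Aut}(\mathcal{M}) \simeq \text{Aut}(\mathcal{M}')$ with uniqueness of cyclic subgroups to upgrade ``isomorphic'' to ``equal as subgroups of $\langle x \rangle$,'' which is exactly what permits reindexing $y' = y_{-t}$ as $y_s$ with $0 \leq s \leq p-1$. The centralizer-of-a-cycle fact underlying everything is standard and can be cited or verified in a line from the cycle-type formula for centralizer orders.
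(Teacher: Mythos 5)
Your proof is correct and takes essentially the same approach as the paper: both arguments rest on the single fact that $\mathrm{Cent}_{S_{2k}}(x)=\langle x\rangle$, force any conjugator realizing an equivalence into $\langle x\rangle$ because the generator $x$ is shared, and then reduce the exponent modulo $p$. The only differences are cosmetic---you justify the centralizer fact via the cycle-type order formula where the paper cites the self-centralizing property of abelian transitive permutation groups, and your periodicity observation ($y_j=y_{j'}$ if and only if $p\mid j-j'$) packages more cleanly the paper's step-by-step reduction of the conjugating power $x^d$ to one with $0\le d\le p-1$.
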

 \begin{proof}   The centralizer of $G_{\cal M}$  is contained in the centralizer of each of its generators;  in particular, it is contained in the centralizer of $\langle x \rangle$.  The latter is an abelian transitive permutation group which, by a well-known result in the theory of permutation groups, is its own centralizer (see, e.g., \cite{S87}, Section 10.3).    It follows that $\text{Aut}({\cal M}) \leq \langle x \rangle \simeq {\mathbb Z}_{2k}.$  Now suppose $p$ is a divisor of $2k$ and $\text{Aut}({\cal M})=\langle x^p \rangle$.   The permutations $x^s$, $s=1, \dots, p-1$ do not commute with $y$ (otherwise they would be automorphisms of ${\cal M}$), hence there are $p$ distinct free involutions $y_s=x^{-s}y{x^s}$,  $s=0, 1, \dots, p-1$,  belonging to strongly conjugate monodromy groups $\langle x, y_s \rangle$, determining distinct but equivalent maps.  Conversely, if ${\cal M}$ is equivalent to ${\cal M}^\prime$, the monodromy groups $G_{{\cal M}}= \langle x, y \rangle$ and $G_{{\cal M}^\prime }= \langle x, y^\prime \rangle$ are strongly conjugate, with conjugate centralizers and hence isomorphic automorphism groups.    Since both automorphism groups are contained in $\langle x \rangle$,  $\text{Aut}({\cal M}) = \text{Aut}({\cal M}^\prime)=\langle x^p \rangle$ for some divisor $p$ of $2k$.  By strong conjugacy of $G_{\cal M}$ and $G_{{\cal M}^\prime}$,  there exists a permutation $\sigma \in S_{2k}$, such that $\sigma^{-1}x\sigma = x$ and $\sigma^{-1}y\sigma = y'$.  In particular $\sigma \in \text{Cent}_{S_{2k}}(\langle x \rangle) = \langle x \rangle$.  If $y^\prime \ne y$, $\sigma \in \langle x \rangle \setminus \langle x^p \rangle$.  Thus $\sigma = x^d$, for $d$ a non-multiple of $p$.  If $d>p$,  $\sigma^{-1} y\sigma={\sigma^\prime}^{-1}y{\sigma^\prime}$, where $\sigma^\prime = x^{d-p}$, since $x^p$ commutes with $y$.   It follows that all possible $y^\prime$ are obtained by taking $\sigma \in \{x, x^2, \dots, x^{p-1}\}$.
 \end{proof}

\subsection{Free involutions with prescribed commutation property}\label{SS:prescribedcommuters}

 If $x \in S_{2k}$ is a $2k$-cycle, and $p$ is a divisor of $2k$, then $x^p$ is a product of $p$ cycles of length $d=2k/p$.  Assuming $x$ is the standard $2k$-cycle \eqref{E:standardcycle},  
 the cycles are
 \begin{equation}\label{E:p-cycles}
 C_j = (j\  j+p\  \dots\ j+2k-p),\qquad j=0, 1, \dots, p-1.
 \end{equation}
  Let $c_y: S_{2k} \rightarrow S_{2k}$  denote conjugation by a free involution $y \in S_{2k}$.   If  $y$ commutes with $x^p$,  then  $\langle c_y \rangle \simeq{\mathbb Z}_2$   block-permutes the $p$ $d$-cycles at \eqref{E:p-cycles},  while preserving their internal cyclic orderings.     The permutation $c_y$ either  preserves a given cycle  or interchanges two distinct cycles in a block-orbit of length $2$.     Suppose $c_y$ preserves the cycle  $C_j$, as well as its internal cyclic order.  Then for some positive integer $t$ and for every symbol $a \in\  C_j$, $c_y(a) = a+tp \pmod{2k}$.  Because $c_y$ has order $2$,  $2tp \equiv 0 \pmod{2k}$ or, equivalently, 
  $tp \equiv 0 \pmod k$, which has the unique (mod $k$) solution $t=k/p=d/2$, implying that {\it $d$ is even}.      On the other hand, 
  if $c_y: S_{2k}\rightarrow S_{2k}$  interchanges two cycles $C_i$ and $C_j$, $i<j$, preserving their internal cyclic orders,  there exists an integer $t \geq 0$ such that  every symbol $i+ap \in\  C_i$, is mapped to $c_y(i+ap) = j+(a+t)p$, and, reciprocally,  every symbol  $j + bp \in \ C_j$, is mapped to $c_y(j+bp) = (i + (b-t)p$ (all symbols are reduced mod $2k$).  The {\it shift parameter} $t$ is not uniquely determined, but may take any value $0, 1, \dots, d-1$.  There is no requirement that $d$ be even.   It follows that a free involution commuting with $x^p$ is uniquely determined by the following data:
  \begin{itemize}
  \item  the set of cycles $C_j$ fixed by $c_y$ (empty if $d=2k/p$ is odd);
  \item the (possibly empty) set of cycle pairs  $[C_i, C_j]$, $i<j$, transposed by $c_y$;
  \item for each transposed pair, a choice of shift parameter $t$, $0 \leq t \leq d-1$.
  \end{itemize}
Let $q=\lfloor p/2\rfloor$, the greatest integer less than or equal to $p/2$.  If $c_y$ transposes $m \leq q$ cycle pairs, those pairs can be chosen, up to reordering, in
$$\dfrac{1}{m!}\begin{pmatrix}p \\ 2 \end{pmatrix} \begin{pmatrix}p-2 \\ 2 \end{pmatrix}  \dots \begin{pmatrix}p-(2m-2) \\ 2 \end{pmatrix}= \biggl(\frac 12 \biggr)^m \frac{p!}{m!(p-2m)!}$$
ways.    Each pair requires a choice of shift parameter  from among $0,1,2,\dots, d-1$, so the number of choices is multiplied by $d^m$.  
If $d$ is odd, there are no unpaired cycles and $m=p/2=q$, an integer since $p$ is even.   If $d$ is even,  unpaired cycles are possible so the number of choices is a sum over all possible $m$.  This yields the following Lemma.  
\begin{lem}\label{L:freecommuters} Let $\overline{\nu}_p$ be the number of free involutions in $S_{2k}$ commuting with $x^p$, where $x$ is a $2k$-cycle and  $p$ a divisor of $2k$.   Then
\begin{equation}\notag
\overline{\nu}_p = 
\begin{cases} \displaystyle{\sum_{m=0}^{ q } \biggl(\frac d2 \biggr)^m \frac{p!}{m!(p-2m)!}} &\text{if $d$ is even} \\
\displaystyle{\biggl(\frac d2 \biggr)^q\  \frac{p!}{q!} }&\text{if $d$ is odd},
\end{cases}
\end{equation}
where $d=2k/p$, and  $q=\lfloor \frac p2 \rfloor$. 
\end{lem}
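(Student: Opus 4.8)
The plan is to turn the parametrization established just above the statement into an exact enumeration. First I would record that the discussion preceding the Lemma exhibits a bijection between the free involutions $y\in S_{2k}$ commuting with $x^p$ and the data consisting of a set of cycles among $C_0,\dots,C_{p-1}$ fixed by $c_y$, a complementary set of unordered pairs $[C_i,C_j]$ transposed by $c_y$, and, for each transposed pair, a shift parameter $t\in\{0,1,\dots,d-1\}$. To justify that this really is a bijection I would build $y$ from the data using the explicit rules $c_y(a)=a+(d/2)p$ on a fixed cycle and $c_y(i+ap)=j+(a+t)p$ on a transposed pair, and check in each case that the resulting permutation is an involution with no fixed symbols: on a transposed pair symbols of $C_i$ are carried into $C_j$, so there is nothing to check, while on a fixed cycle the forced shift $d/2$ displaces every symbol (which is exactly why such a cycle can occur only when $d$ is even). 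Conversely, $c_y$ permutes the $p$ cycles with order at most $2$ since $y^2=1$, and preserving the internal cyclic order of each cycle lets one read the shift parameters back off.

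Granting the bijection, I would enumerate by the number $m$ of transposed pairs. As computed in the text, the $2m$ cycles to be paired can be chosen and matched in $\tfrac1{m!}\binom{p}{2}\binom{p-2}{2}\cdots\binom{p-2m+2}{2}=\frac{p!}{2^m m!(p-2m)!}$ ways, and the $m$ independent shift choices contribute a factor $d^m$; the remaining $p-2m$ cycles are then forced to be fixed (each uniquely, and only when $d$ is even). Hence the number of commuting free involutions with exactly $m$ transposed pairs is $(d/2)^m\,\frac{p!}{m!(p-2m)!}$. Summing over the admissible $m$ finishes the computation: if $d$ is even, fixed cycles are permitted so $m$ runs over $0\le m\le q=\lfloor p/2\rfloor$ (since $2m\le p$), giving the first branch; if $d$ is odd, no cycle may be fixed, forcing $p-2m=0$, so the only admissible value is $m=q=p/2$ (note $p$ is even because $pd=2k$ is even with $d$ odd), and the single term $(d/2)^q\frac{p!}{q!\,0!}=(d/2)^q\frac{p!}{q!}$ gives the second branch.

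The step I expect to be the real content is confirming that the three-part data is a genuine bijection with no hidden dependencies: that the shift parameter of a transposed pair is unconstrained over $\{0,\dots,d-1\}$ while the shift of a fixed cycle is rigidly determined, and that the \emph{freeness} of $y$ imposes no condition beyond ruling out the zero shift on a would-be fixed cycle (thereby forcing $d$ even there). Everything after that is bookkeeping, the only fine points being the $1/m!$ correction for unordered pairs and the degenerate evaluation $0!=1$ that collapses the odd-$d$ sum to its top term.
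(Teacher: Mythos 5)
Your proposal is correct and follows essentially the same route as the paper: parametrizing a commuting free involution by its fixed cycles, transposed cycle pairs, and shift parameters, then counting by the number $m$ of transposed pairs with the factor $\bigl(\frac d2\bigr)^m\frac{p!}{m!(p-2m)!}$ and splitting on the parity of $d$. The additional verifications you flag (freeness only excludes the zero shift on a fixed cycle, and the $1/m!$ correction for unordered pairs) are exactly the points the paper treats, if somewhat more tersely.
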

\noindent 
The Lemma yields $\overline{\nu}_{2k} = (2k-1)!!$,  as it should, since every free involution commutes with $x^{2k}=\{1\}$, and $\overline{\nu}_{1} = 1$, since  $x^k$ is the unique (free) involution in $\text{Cent}_{S_{2k}}(x) = \langle x \rangle$.

\subsection{One-vertex maps with prescribed automorphism}

If $y$ commutes with $x^p$, but no lower power of $x$, then 
$\langle x^p \rangle$ is the {\it full} automorphism group of a one-vertex map whose monodromy group is strongly conjugate to $\langle x, y \rangle$.

\begin{theorem}\label{T:fullautcount} Let $\nu_p$ be the number of one-vertex maps with $k$ edges whose full automorphism group is $\langle x^p \rangle$, $p$ a divisor of $2k$.   Then  
$\nu_p = \overline{\nu}_p + \sum_{i=1}^s (-1)^i\sigma_i$,
where 
$$\sigma_1 = \sum_{1 \leq j \leq s}\overline{\nu}_{p/p_j},\quad \sigma_2 = \sum_{1\leq j <k \leq s}\overline{\nu}_{p/p_jp_k}, \quad \sigma_3 = \sum_{1 \leq j<k<l \leq s}\overline{ \nu}_{p/p_jp_kp_l},  \quad \dots, $$
and $p_i$, $i=1, \dots, s$, are the prime divisors of $p$.
\end{theorem}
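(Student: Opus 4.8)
The plan is to reduce the statement to a Möbius inversion over the divisors of $p$, once I have the right summation identity relating $\nu_p$ to the counts $\overline{\nu}_{p'}$ of Lemma~\ref{L:freecommuters}. First I would use Theorem~\ref{T:MapAuts}: with $x$ fixed as the standard $2k$-cycle, a one-vertex map is determined by its free involution $y$, and its automorphism group is $\langle x\rangle \cap \text{Cent}_{S_{2k}}(y)=\{x^j \mid x^jy=yx^j\}$. The set of exponents $j$ for which $x^j$ commutes with $y$ is closed under addition and negation, hence is a subgroup of the cyclic group $\langle x\rangle\simeq{\mathbb Z}_{2k}$, and so equals $\langle x^{p_0}\rangle$ for a unique divisor $p_0$ of $2k$; this $\langle x^{p_0}\rangle$ is the full automorphism group of the corresponding map. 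The key elementary point is that $y$ commutes with $x^p$ if and only if $p$ lies in this subgroup, that is, if and only if $p_0 \mid p$.

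With this observation I would partition the free involutions counted by $\overline{\nu}_p$ according to their exact period $p_0$. Because such an involution commutes with $x^p$ exactly when $p_0\mid p$, and because $\nu_{p_0}$ counts precisely those involutions whose full automorphism group is $\langle x^{p_0}\rangle$, the partition gives
\[
\overline{\nu}_p=\sum_{p_0\mid p}\nu_{p_0}.
\]
(Summing over all divisors of $2k$ recovers $\overline{\nu}_{2k}=(2k-1)!!$, the total number of free involutions, a useful consistency check.)

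Finally I would invert this divisor-indexed identity. Möbius inversion yields $\nu_p=\sum_{d\mid p}\mu(d)\,\overline{\nu}_{p/d}$; since $\mu(d)=0$ unless $d$ is squarefree and $\mu(d)=(-1)^i$ when $d=p_{j_1}\cdots p_{j_i}$ is a product of $i$ distinct prime divisors of $p$, the surviving terms collect exactly into $\overline{\nu}_p+\sum_{i=1}^s(-1)^i\sigma_i$ with the $\sigma_i$ as stated. Equivalently one may argue by direct inclusion-exclusion on the prime divisors $p_1,\dots,p_s$ of $p$, subtracting from $\overline{\nu}_p$ the involutions that in fact commute with some $x^{p/p_j}$ and applying the usual alternating corrections for the overcounted intersections. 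The only substantive step is the first: establishing that the commuting exponents form a subgroup of $\langle x\rangle$ and that commutation with $x^p$ is equivalent to $p_0\mid p$, which is what turns $\overline{\nu}_p$ into a clean sum of the exact counts $\nu_{p_0}$. Once that summation identity is secured, the remainder is routine Möbius/inclusion-exclusion bookkeeping and presents no real obstacle.
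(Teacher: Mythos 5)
Your proposal is correct and takes essentially the same route as the paper: the paper's proof is exactly an inclusion--exclusion over the divisors of $p$, resting on the same partition $\overline{\nu}_p=\sum_{p_0\mid p}\nu_{p_0}$ that you make explicit, with the cancellation checked by the binomial theorem rather than by citing M\"obius inversion. The only real difference is presentational: your subgroup argument identifying $\langle x\rangle\cap\text{Cent}_{S_{2k}}(y)=\langle x^{p_0}\rangle$, so that $y$ commutes with $x^p$ if and only if $p_0\mid p$, gives a cleaner justification of the fact the paper asserts without proof in the sentence preceding the theorem.
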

\begin{proof}
 Let $p'$ be any proper divisor of $p$.  If $p$ is a prime, $p'=1$ and $\overline{\nu}_1=1$.  Hence $\nu_p = \overline{\nu}_p-1 = \overline \nu_p-\sigma_1$, proving the theorem in this case.  If $p$ is not prime, $p=p' p_1^{\epsilon_1}p_2^{\epsilon_2}\dots p_s^{\epsilon_s}$, where  at least one of the $\epsilon_i > 0$.  Reordering the $p_i$ if necessary we may assume that the first $t\leq s$ of the $\epsilon_i > 0$, while the remaining $s-t$ are $0$.  Hence,  without loss of generality, $p=p'p_1^{\epsilon_1}p_2^{\epsilon_2}\dots p_t^{\epsilon_{t}}$,  with all $\epsilon_i > 0,$ $i=1,2, \dots, t$.  
  Each map with automorphism group $\langle x^{p'} \rangle$ contributes $1$ to $\overline\nu_p$; it suffices to show that such a map contributes $-1$ to$ \sum_{i=1}^s (-1)^i\sigma_i$, so that its total contribution to the formula in the theorem is $0$.  We use a standard inclusion/exclusion argument.   If $t=1$, the map contributes $-1$ to $-\sigma_1$ and $0$ to the remaining summands.  If $t=2$, the map contributes $-2$ to $-\sigma_1$, $1$ to $\sigma_2$, and $0$ to the remaining summands.  In general, for arbitrary $t\leq s$, the contribution is $-(\begin{smallmatrix} t \\ 1\end{smallmatrix})$ to $-\sigma_1$,  $(\begin{smallmatrix} t \\ 2\end{smallmatrix})$ to $\sigma_2$, etc.  It follows from the binomial theorem that the total contribution is
  $$1 + \sum_{i=1}^t(-1)^i\begin{pmatrix}t \\ i\end{pmatrix}=(1-1)^t=0.$$  Thus the formula counts only those maps whose automorphism group is equal to $\langle x^p \rangle$.   
  \end{proof}
 \begin{cor}\label{C:equivcount} The number of equivalence classes of one-vertex maps with $k$ edges whose full automorphism group is  $\langle x^p \rangle$, $p$ a divisor of $2k$, is $\nu_p/p$.
\end{cor}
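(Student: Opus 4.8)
The plan is to read off the corollary as an orbit-counting statement, feeding the raw count $\nu_p$ from Theorem~\ref{T:fullautcount} into the explicit description of equivalence classes supplied by Theorem~\ref{T:MapAuts}(ii). Recall the standing convention: once $x$ is fixed to be the standard $2k$-cycle \eqref{E:standardcycle}, a one-vertex map with $k$ edges is encoded by its free involution $y$, and $\nu_p$ is precisely the number of such involutions $y$ for which $\langle x,y\rangle$ has full automorphism group $\langle x^p\rangle$. I would therefore recast the corollary as the assertion that the equivalence relation partitions these $\nu_p$ maps into classes \emph{each of cardinality exactly} $p$; the count $\nu_p/p$ then follows instantly from dividing a set of size $\nu_p$ into blocks of uniform size $p$.

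First I would invoke the forward direction of Theorem~\ref{T:MapAuts}(ii): given a map ${\cal M}=(x,y)$ with $\text{Aut}({\cal M})=\langle x^p\rangle$, the $p$ conjugates $y_s=x^{-s}yx^s$, $s=0,\dots,p-1$, are pairwise distinct free involutions determining distinct but mutually equivalent maps. The distinctness is where the hypothesis on the automorphism group is essential, and I would spell it out: if $y_s=y_{s'}$ with $0\le s,s'\le p-1$, then $x^{s'-s}$ commutes with $y$, and trivially with $x$, so $x^{s'-s}$ centralizes $G_{\cal M}=\langle x,y\rangle$ and hence lies in $\text{Aut}({\cal M})=\text{Cent}_{S_{2k}}(G_{\cal M})=\langle x^p\rangle$; since $|s'-s|<p$, the only multiple of $p$ in range is $0$, forcing $s=s'$. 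This shows each class meets the set of $\nu_p$ maps in at least $p$ elements.

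Next I would apply the converse direction of the same theorem to bound the class size from above: any map $(x,y')$ equivalent to $(x,y)$ satisfies $y'=y_s$ for some $s\in\{0,\dots,p-1\}$ and automatically shares the automorphism group $\langle x^p\rangle$. Hence every equivalence class of maps with full automorphism group $\langle x^p\rangle$ has exactly $p$ members among the $\nu_p$ maps of Theorem~\ref{T:fullautcount}, and the number of classes is $\nu_p/p$ as claimed. The substance of the argument is entirely contained in Theorem~\ref{T:MapAuts}; the corollary is the uniform-orbit-size observation on top of it. The only point demanding genuine care --- and thus the nearest thing to an obstacle --- is confirming that the common class size is $p$ rather than a proper divisor of $p$, i.e.\ the distinctness of the $y_s$. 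This is exactly where ``full automorphism group equal to $\langle x^p\rangle$'' (rather than merely containing it) is used, via the centralizer characterization $\text{Aut}({\cal M})=\text{Cent}_{S_{2k}}(G_{\cal M})$ together with $\text{Cent}_{S_{2k}}(\langle x\rangle)=\langle x\rangle$. A harmless byproduct worth noting is that $p\mid\nu_p$.
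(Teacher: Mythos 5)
Your proof is correct and takes essentially the same route as the paper, whose entire proof is the citation ``Theorem~\ref{T:MapAuts}(ii) combined with Theorem~\ref{T:fullautcount}'': the content is precisely your observation that each equivalence class meets the $\nu_p$ standard-form maps in exactly $p$ involutions $y_s$, with distinctness guaranteed by $\text{Aut}({\cal M})=\langle x^p\rangle$. You have merely made explicit the uniform-orbit-size argument (and the divisibility $p\mid\nu_p$) that the paper leaves implicit.
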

\begin{proof}  Theorem~\ref{T:MapAuts}(ii) combined with Theorem~\ref{T:fullautcount}. 
  \end{proof}

  In particular, the number of equivalence classes of strictly edge-transitive,  one-vertex maps with $k$ edges is
\begin{equation}\label{E:edgetranscount}
\nu_2/2 = \begin{cases}
k/2 &\text{if $k$ is even;}\\
(k-1)/2 &\text{if $k$ is odd.}
\end{cases}
\end{equation}
Since $\nu_1 = 1$, there is a unique regular one-vertex map with $k$ edges.  Figure~\ref{F:k=3} illustrates the case $k=3$.

\begin{figure}
\begin{center}
\begin{pspicture}(0,-2)(13,2.5)
\SpecialCoor
\degrees[6]
\rput(1;0){\psdots(1;1)}
\rput(6;0){\psdots(1;1)}
\rput(10;0){\psdots(1;1)}
\rput(1;0){\psarc[linewidth=0.04](1;1){1}{0}{1}}
\rput(1;0){\psarc[linewidth=0.04](1;1){1}{2}{3}}
\rput(1;0){\psarc[linewidth=0.04](1;1){1}{4}{5}}
\rput(1;0){\rput(1;1){\psline[linewidth=1.5 pt, arrows=<-](0,0)(1;0)}}
\rput(1;0){\rput(1;1){\psline[linewidth=1.5 pt, arrows=<-](0,0)(1;1)}}
\rput(1;0){\rput(1;1){\psline[linewidth=1.5 pt, arrows=<-](0,0)(1;2)}}
\rput(1;0){\rput(1;1){\psline[linewidth=1.5 pt, arrows=<-](0,0)(1;3)}}
\rput(1;0){\rput(1;1){\psline[linewidth=1.5 pt, arrows=<-](0,0)(1;4)}}
\rput(1;0){\rput(1;1){\psline[linewidth=1.5 pt, arrows=<-](0,0)(1;5)}}

\rput(6;0){\psarc[linewidth=0.04](1;1){1.2}{0}{3}}
\rput(6;0){\psarc[linewidth=0.04](1;1){1}{1}{4}}
\rput(6;0){\psarc[linewidth=0.04](1;1){.8}{2}{5}}
\rput(6;0){\rput(1;1){\psline[linewidth=1.5 pt, arrows=<-](0,0)(1.2;0)}}
\rput(6;0){\rput(1;1){\psline[linewidth=1.5 pt, arrows=<-](0,0)(1.2;3)}}
\rput(6;0){\rput(1;1){\psline[linewidth=1.5 pt, arrows=<-](0,0)(1;1)}}
\rput(6;0){\rput(1;1){\psline[linewidth=1.5 pt, arrows=<-](0,0)(1;4)}}
\rput(6;0){\rput(1;1){\psline[linewidth=1.5 pt, arrows=<-](0,0)(0.8;2)}}
\rput(6;0){\rput(1;1){\psline[linewidth=1.5 pt, arrows=<-](0,0)(0.8;5)}}

\rput(10;0){\psarc[linewidth=0.04](1;1){1}{1}{2}}
\rput(10;0){\psarc[linewidth=0.04](1;1){1}{3}{4}}
\rput(10;0){\psarc[linewidth=0.04](1;1){1}{5}{6}}
\rput(10;0){\rput(1;1){\psline[linewidth=1.5 pt, arrows=<-](0,0)(1;1)}}
\rput(10;0){\rput(1;1){\psline[linewidth=1.5 pt, arrows=<-](0,0)(1;2)}}
\rput(10;0){\rput(1;1){\psline[linewidth=1.5 pt, arrows=<-](0,0)(1;3)}}
\rput(10;0){\rput(1;1){\psline[linewidth=1.5 pt, arrows=<-](0,0)(1;4)}}
\rput(10;0){\rput(1;1){\psline[linewidth=1.5 pt, arrows=<-](0,0)(1;5)}}
\rput(10;0){\rput(1;1){\psline[linewidth=1.5 pt, arrows=<-](0,0)(1;6)}}

\rput(1;0){
\uput{9 pt}[0.4](1;1){\tiny 0}
\uput{9 pt}[1.4](1;1){\tiny 1}
\uput{9 pt}[2.4](1;1){\tiny 2}
\uput{9 pt}[3.4](1;1){\tiny 3}
\uput{9 pt}[4.4](1;1){\tiny 4}
\uput{9 pt}[5.4](1;1){\tiny 5}
}

\rput(6;0){
\uput{9 pt}[0.4](1;1){\tiny 0}
\uput{9 pt}[1.4](1;1){\tiny 1}
\uput{9 pt}[2.4](1;1){\tiny 2}
\uput{9 pt}[3.4](1;1){\tiny 3}
\uput{9 pt}[4.4](1;1){\tiny 4}
\uput{9 pt}[5.4](1;1){\tiny 5}
}

\rput(10;0){
\uput{9 pt}[0.4](1;1){\tiny 0}
\uput{9 pt}[1.4](1;1){\tiny 1}
\uput{9 pt}[2.4](1;1){\tiny 2}
\uput{9 pt}[3.4](1;1){\tiny 3}
\uput{9 pt}[4.4](1;1){\tiny 4}
\uput{9 pt}[5.4](1;1){\tiny 5}
}

\uput[0](-.5, -.6){$x= (0\ 1\ 2\ 3\ 4\ 5)$}
\uput[0](-.5,-1){$y=  (0\ 1)(2\ 3)(4\ 5)$}
\uput[0](-.5, -1.4){$yx^{-1} = (0\ 4\ 2)\ (1)\ (3)\ (5)$}

\uput[0](4.5, -.6){$x= (0\ 1\ 2\ 3\ 4\ 5)$}
\uput[0](4.5,-1){$y=  (0\ 3)(1\ 4)(2\ 5)$}
\uput[0](4.5, -1.4){$yx^{-1} = (0\ 2\ 4)\ (1\ 3\ 5)$}

\uput[0](9, -.6){$x= (0\ 1\ 2\ 3\ 4\ 5)$}
\uput[0](9,-1){$y=  (1\ 2)(3\ 4)(5\ 0)$}
\uput[0](9, -1.4){$yx^{-1} = (0)\ (2)\ (4)\ (1\ 3\ 5)$}

\uput[0](-.8,1.6){(a)}
\uput[0](4,1.6){(b)}
\uput[0](8.6,1.6){(c)}
\end{pspicture}
\caption{One-vertex maps with three edges.  (a) and (c) are equivalent and strictly edge-transitive;  (b) is regular.}\label{F:k=3}
 \end{center}
 \end{figure}
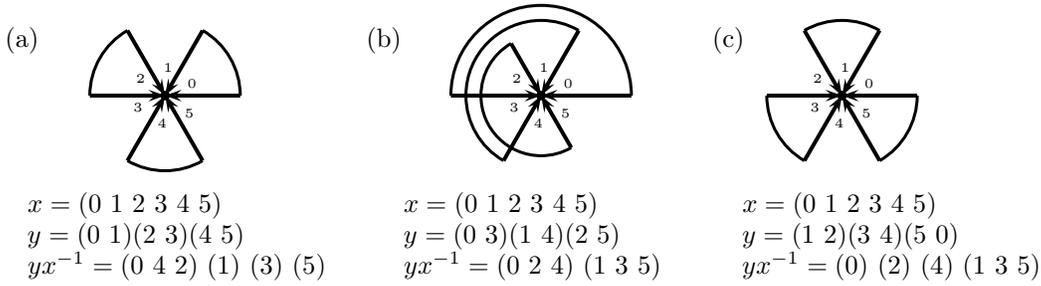
  
 \section{Conformal automorphisms of Riemann surfaces}\label{S:extendability}
 
 A finite group G acts by conformal automorphisms on a compact Riemann surface $X$ of genus $g$ if and only if  there is a group $\Lambda(\sigma)$, with signature  $\sigma$,  containing a normal surface subgroup $M_g$, with signature $(g; -)$,  and a {\it surface-kernel epimorphism\,} $\rho: \Lambda \rightarrow G$ making the sequence of homomorphisms
 \begin{equation}\label{E:shortexact}
 \{1\} \rightarrow M_g \hookrightarrow \Lambda(\sigma) \overset{\rho}{\rightarrow} G \rightarrow \{1\}
 \end{equation}
 exact (see \cite{M61}, or \cite{JS87}, Theorem 5.9.5).   The surface $X$ is conformally equivalent to ${\,\cal U}/M_g$, where ${\,\cal U}$ is the universal covering space.   $G$ is isomorphic to $\Lambda(\sigma)/M_g$,  and the Riemann-Hurwitz relation requires $2g-2=|G|\mu(\sigma)$.    We say that {\it $G$ acts in genus $g$ with signature $\sigma$}.     
 \begin{dfn}  A {\em $\sigma$-generating vector  for a finite group $G$} is  an ordered set of elements, one for each generator of $\Lambda(\sigma)$,   which generate $G$ and fulfill  the corresponding relations in $\Lambda(\sigma)$ (and possible other relations).  
 \end{dfn}
\noindent A $\sigma$-generating vector for $G$ determines a surface kernel epimorphism $\rho$ satisfying \eqref{E:shortexact}, and conversely.  Hence 
\begin{lem}$G$ acts in genus $g$ with signature $\sigma$ if and only if $G$ has a $\sigma$-generating vector.
\end{lem}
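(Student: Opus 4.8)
The plan is to use the universal property of the presentation \eqref{E:sigpres} to pass freely between epimorphisms $\Lambda(\sigma)\to G$ and ordered generating tuples in $G$, and then to recognize the kernel of such an epimorphism as the surface group $M_g$ in the exact sequence \eqref{E:shortexact}. For the forward direction, suppose $G$ acts in genus $g$ with signature $\sigma$. By the characterization stated just before the lemma there is a surface-kernel epimorphism $\rho:\Lambda(\sigma)\to G$ realizing \eqref{E:shortexact}. Evaluating $\rho$ on the generators $a_1,b_1,\dots,a_h,b_h,\xi_1,\dots,\xi_s$ yields an ordered tuple in $G$; surjectivity of $\rho$ makes this tuple generate $G$, and the homomorphism property makes it satisfy the images of the defining relations of $\Lambda(\sigma)$. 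Hence it is a $\sigma$-generating vector.

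For the converse I would start from a $\sigma$-generating vector and send each generator of $\Lambda(\sigma)$ to the corresponding element of the vector. Because the vector satisfies the defining relations of \eqref{E:sigpres}, this assignment extends (by the universal property of the presentation) to a homomorphism $\rho:\Lambda(\sigma)\to G$; because the vector generates $G$, the map $\rho$ is surjective. Set $M_g=\ker\rho$, a normal subgroup with $\Lambda(\sigma)/M_g\cong G$. It then remains only to verify that $M_g$ is a surface group of signature $(g;-)$, which produces \eqref{E:shortexact} and finishes the argument.

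The main obstacle --- indeed the only substantive point --- is showing that $M_g$ is torsion-free, equivalently that $\rho$ is genuinely a \emph{surface-kernel} epimorphism rather than merely an epimorphism. Here I would invoke the standard structural fact about groups with signature that every element of finite order in $\Lambda(\sigma)$ is conjugate to a power of one of the elliptic generators $\xi_j$; it follows that $\ker\rho$ avoids all torsion exactly when each $\rho(\xi_j)$ has order $r_j$, which is precisely the period-preservation encoded in a $\sigma$-generating vector. Granting torsion-freeness, $M_g$ is a surface group of some genus $g'$, so its signature $(g';-)$ has $\mu=2g'-2$; applying the Riemann-Hurwitz relation $\mu(\sigma')=|G|\,\mu(\sigma)$ to the inclusion of index $|G|$ gives $2g'-2=|G|\,\mu(\sigma)=2g-2$, whence $g'=g$ and $M_g$ has the required signature $(g;-)$.
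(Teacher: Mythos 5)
Your proof is correct and takes essentially the same route as the paper, which disposes of the lemma in one line by asserting the correspondence between $\sigma$-generating vectors and surface-kernel epimorphisms $\rho:\Lambda(\sigma)\to G$ (citing Macbeath and Jones--Singerman) and calling the lemma an immediate consequence. You merely supply the details the paper leaves implicit --- the universal property of the presentation, torsion-freeness of $\ker\rho$ via the fact that finite-order elements of $\Lambda(\sigma)$ are conjugate to powers of the $\xi_j$, and Riemann--Hurwitz to pin down the genus --- and you correctly read the definition of a generating vector as requiring each $\rho(\xi_j)$ to have order exactly $r_j$, which is precisely what the converse needs.
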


 Let  $\sigma=(0;n, m, r)$, so that
$\Lambda(\sigma)$ is a {\it triangle group\,}, and let $G={\mathbb Z}_n$.  By a general theorem of Maclachlan \cite{M65} (see also Harvey \cite{H66}),  an abelian group of order $n$ has an $(0;n,m,r)$-generating vector  if an only if $\text{lcm}(m,r)=n$.  We establish a normal form for such a generating vector.  Let ${\mathbb Z}_n = \langle x \mid x^n=1 \rangle$, and let  $\langle x^a, x^b, x^c \rangle$ be a $(0;n,m,r)$-generating vector.  Then $a+b+c \equiv 0 \pmod n$ since the product of the elements must be the identity.   For the elements to have the appropriate orders,  $n/(a,n) =n$, $n/(b,n) = m$, and $n/(c,n) = r$, where $(\ ,\ )$ denotes the gcd (greatest common divisor) of two integers.    If $\rho$ in \eqref{E:shortexact} is post-composed with an automorphism of $G$, an equivalent generating vector, specifying a topologically equivalent action, is obtained (see \cite{B90}, Prop. 2.2).  Hence we may assume $a=1$.   Writing ${\mathbb Z}_n$  additively, with the generator  $1$,  we define 
\begin{dfn} The {\em normal form} for an {\it $(0;n,m,r)$-generating vector\,} for ${\mathbb Z}_n = \langle \{0,1,\dots, n\}, + \rangle$ is  
 \begin{equation}\label{E:ZnGenVector}
 \langle\  1,\  b,\  c\  \rangle, \quad 1+b+c \equiv 0 \ (\text{mod\ }n), \quad n/(b,n)=m, \quad n/(c,n)=r.
 \end{equation}
 \end{dfn}

\subsection{Finite maximality and extendability}
 \begin{dfn} A group with signature is {\em finitely maximal} if it is not a subgroup of finite index in any other group with signature.
 \end{dfn}
\noindent    If $\Lambda(\sigma)$  in \eqref{E:shortexact} is finitely maximal, then $\Lambda(\sigma)/M_g$ is necessarily the {\it full\,} automorphism group of $X={\,\cal U}/M_g$. 
    
      Finite maximality is not generally a property of the abstract group $\Lambda(\sigma)$, but depends on the particular imbedding of the group in $\text{Isom}^+({\cal U})$.  However,  there are some pairs of signatures $(\sigma, \sigma')$ such that every imbedding of a group $\Lambda(\sigma)$ with signature $\sigma$ in $\text{Isom}^+({\cal U})$ extends to an imbedding of an overgroup $\Lambda(\sigma')$.  Signature pairs of this type were considered by Greenberg in \cite{G74}, and fully classified by Singerman \cite{S72}.    For each such pair, there arises a delicate problem in finite group theory:   for each subgroup-overgroup pair $(G,G')$, with $[G':G]=[\Lambda(\sigma'):\Lambda(\sigma)]$, and such that $G'$ has a $\sigma'$-generating vector and $G$ has a $\sigma$-generating vector, determine conditions under which the $G'$-action is an extension of the $G$-action {\em on the same surface}.  Equivalently, determine conditions under which the surface-kernel epimorphism $\rho$ in \eqref{E:shortexact} is the restriction of a surface-kernel epimorphism (with the same kernel) $\rho':\Lambda(\sigma') \rightarrow G'$.  This problem is treated comprehensively in the papers 
  \cite{BCC02} and \cite{BC99}.   The signature pairs $(\sigma, \sigma')$ where both signatures are triangular and the first is {\it cyclic-admissable} -- that is, $\Lambda(\sigma)$ admits surface-kernel epimorphisms onto a cyclic $G$ -- are given in Table~\ref{Ta:cyclicadmissible}, which uses the case nomenclature established in \cite{BCC02}.   The index $[\Lambda(\sigma'):\Lambda(\sigma)]=[G':G]$ is  easily computed using the Riemann-Hurwitz relation and the fact that $M_g$ is a subgroup of both groups.  In cases N6 and N8 in the table, $\Lambda(\sigma)$ is a normal subgroup of  $\Lambda(\sigma')$.   We give complete descriptions of these cases below.  The necessary numerical conditions on  $k$ are implicit but not stated in \cite{BCC02}.
  \begin{table}[h]
\begin{center}
\begin{tabular}{| r | c | c | c | c |}
\hline
Case & $\sigma$ & $\sigma'$ & $[\Lambda(\sigma') :\Lambda(\sigma)]$& Conditions\\
\hline
N6 & $(0;k,k,k)$ & $(0;3,3,k)$ & $3$& $k\geq 4$ \\
N8 & $(0;k,k,u)$ & $(0;2,k,2u)$ & $2$ &$u | k$, $k\geq 3$ \\
T1 & $(0;7,7,7)$ &$(0;2,3,7)$ & $24$& -\\
T4 & $(0;8,8,4)$ & $(0;2,3,8)$ & $12$& - \\
T8 & $(0;4k,4k,k)$ & $(0;2,3,4k)$ & $6$ & $k\geq 2$ \\
T9 & $(0;2k,2k, k)$ & $(0;2,4,2k)$ & $4$ &$k\geq 3$  \\
T10 & $(0;3k,k,3)$ & $(0;2,3,3k)$ & $4$&$k\geq 3$  \\
\hline
\end{tabular}
\end{center}
\caption{Cyclic-admissible signatures ($\sigma$) and possible extensions ($\sigma'$)}\label{Ta:cyclicadmissible}
\end{table}

\begin{lem}[Case N6]\label{L:N6}
 A ${\mathbb Z}_k$ action with signature $(0;k,k,k)$  has an extension of type {\rm N6} to a $G$ action with signature $(0;3,3,k)$  only if $3^2 \nmid k$ and $p \equiv 1 \pmod 3$ for every prime divisor $p\ne 3$ of $k$.   The numerical conditions imply that ${\mathbb Z}_k$ has an automorphism $\beta$ of order $3$, acting non-trivially on each $p$-Sylow subgroup, $p \ne 3$.  Then $G \simeq {\mathbb Z}_3 \ltimes_\beta {\mathbb Z}_k$ and the normal form of the generating vector for the ${\mathbb Z}_k$ action is $\langle 1, \beta(1), \beta^2(1)\rangle$.
\end{lem}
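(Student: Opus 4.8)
The plan is to pass from the pair of groups with signature to their common quotient and then reduce everything to a single congruence. By the definition of Case~N6 (the normality recorded in the paragraph preceding Table~\ref{Ta:cyclicadmissible}), an extension of type N6 is a normal inclusion $\Lambda(0;k,k,k) \triangleleft \Lambda(0;3,3,k)$ of index $3$ sharing the surface kernel $M_g$ of \eqref{E:shortexact}. Writing $\Lambda' = \Lambda(0;3,3,k)$ with canonical generators $\xi_1,\xi_2,\xi_3$ and setting $G = \Lambda'/M_g$, I would first record that $\mathbb{Z}_k = \Lambda(0;k,k,k)/M_g$ is normal in $G$ with $G/\mathbb{Z}_k \cong \mathbb{Z}_3$. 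The first step is to pin down the quotient map $q\colon \Lambda' \to \mathbb{Z}_3$ whose kernel is $\Lambda(0;k,k,k)$: since $k \geq 4$, the group $\Lambda(0;k,k,k)$ has no elliptic element of order $3$, so the order-$3$ generators $\xi_1,\xi_2$ cannot lie in $\ker q$; hence $q(\xi_1),q(\xi_2)$ are the two nontrivial elements of $\mathbb{Z}_3$ and $q(\xi_3)=0$. The period formula for normal subgroups of Fuchsian groups (or a direct Riemann--Hurwitz count using $\mu(0;k,k,k)=3\,\mu(0;3,3,k)$) then confirms that $\ker q$ has signature exactly $(0;k,k,k)$, with its three order-$k$ generators arising from the single coset-fixing generator $\xi_3$.

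Next I would determine the structure of $G$ and the generating vector simultaneously. Because $\bar\xi_1 \in G$ has order $3$ and maps onto a generator of $G/\mathbb{Z}_k$, the subgroup $\langle \bar\xi_1\rangle \cong \mathbb{Z}_3$ meets $\mathbb{Z}_k$ trivially and is a complement, so $G = \mathbb{Z}_k \rtimes \langle \bar\xi_1\rangle \cong \mathbb{Z}_3 \ltimes_\beta \mathbb{Z}_k$, where $\beta(z)=\bar\xi_1 z \bar\xi_1^{-1}$ is an automorphism of $\mathbb{Z}_k$ of order dividing $3$. Applying the Reidemeister--Schreier process to $\Lambda' \triangleright \ker q$ with Schreier transversal $\{1,\xi_1,\xi_1^2\}$, the three generators of order $k$ come out as $\xi_3,\ \xi_1\xi_3\xi_1^{-1},\ \xi_1^2\xi_3\xi_1^{-2}$; under $\rho$ these become $w,\ \beta(w),\ \beta^2(w)$ with $w=\bar\xi_3$ a generator of $\mathbb{Z}_k$. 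Their product is the canonical relator: using $\xi_3\xi_1=\xi_2^{-1}$ (from $\xi_1\xi_2\xi_3=1$) one gets $\xi_3\cdot\xi_1\xi_3\xi_1^{-1}\cdot\xi_1^2\xi_3\xi_1^{-2}=(\xi_3\xi_1)^3=\xi_2^{-3}=1$. Renaming the generator $w$ of $\mathbb{Z}_k$ as $1$ then gives the asserted normal form $\langle 1,\beta(1),\beta^2(1)\rangle$.

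The remaining, and decisive, step is to extract the numerical conditions. Since $\mathbb{Z}_k$ is abelian, the product relation reads $(1+\beta+\beta^2)(w)=0$, and as $w$ generates $\mathbb{Z}_k$ this says $1+\beta+\beta^2=0$ as an endomorphism of $\mathbb{Z}_k$. Identifying $\mathrm{Aut}(\mathbb{Z}_k)$ with $(\mathbb{Z}/k\mathbb{Z})^\times$ and $\beta$ with a unit $r$, the condition becomes $r^2+r+1\equiv 0 \pmod k$; note that for $k\geq 4$ the alternative $\beta=\mathrm{id}$ forces $3\equiv0\pmod k$, which is impossible, so $\beta$ has order exactly $3$. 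I would then solve $r^2+r+1\equiv0$ prime-power by prime-power via the Chinese Remainder Theorem. For a prime $p\neq 3$, Hensel's lemma applies because the derivative $2r+1$ is a unit at any root (its vanishing would force $p=3$), so solvability modulo $p^a$ is equivalent to solvability modulo $p$, i.e.\ to $x^2+x+1$ having a root modulo $p$, i.e.\ to $p\equiv 1\pmod 3$. For $p=3$ there is the root $r\equiv1$ modulo $3$, but a direct check shows $r^2+r+1\not\equiv0\pmod 9$ for every $r$, so the congruence is solvable modulo $3^a$ only when $a\leq 1$. Together these give precisely $3^2\nmid k$ and $p\equiv1\pmod3$ for each prime divisor $p\neq3$ of $k$, and conversely these conditions produce the required $r$; on each $p$-Sylow with $p\neq3$ one has $r\not\equiv1$ (else $p\mid 3$), so $\beta$ acts nontrivially there, while $r\equiv1\pmod3$ makes it trivial on the $3$-Sylow. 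The main obstacle I anticipate is the Reidemeister--Schreier bookkeeping that certifies the canonical generators of $\Lambda(0;k,k,k)$ as the three $\beta$-translates of $\xi_3$ (so that the relator becomes the norm $1+\beta+\beta^2$), together with the Hensel obstruction at the prime $3$ that pins down $3^2\nmid k$.
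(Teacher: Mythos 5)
Your argument is essentially correct, but it reaches the lemma by a genuinely different route than the paper. The paper never sets up the quotient map $q$ or a Reidemeister--Schreier computation: it starts from the semidirect product structure $G\simeq{\mathbb Z}_3\ltimes_\beta{\mathbb Z}_k$ and exploits a single group-theoretic constraint, namely that $G$ and its relevant subgroups, being homomorphic images of $\Lambda(0;3,3,k)$, are generated by at most two elements of order $3$. Sylow by Sylow, this forces $s\le 1$ in ${}^3G\simeq{\mathbb Z}_3\ltimes{\mathbb Z}_{3^s}$ (hence $3^2\nmid k$) and forces each $\beta_p$, $p\ne 3$, to be a nontrivial order-$3$ automorphism of a cyclic $p$-group (hence $p\equiv 1\pmod 3$); the normal form then drops out because $1+\beta_p(1)+\beta_p^2(1)$ is $\beta_p$-fixed, hence zero. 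You instead pin down the generating vector first (your transversal computation and the relation $(\xi_3\xi_1)^3=\xi_2^{-3}=1$ are correct) and reduce everything to the norm congruence $r^2+r+1\equiv 0\pmod k$, solved prime-by-prime via CRT and Hensel. Your route buys something the paper leaves implicit: an actual proof of the converse (solvability of the congruence, i.e.\ existence of $\beta$, under the stated numerical conditions), where the paper only asserts that the conditions ``imply'' the existence of $\beta$. The paper's route is shorter and stays inside finite group theory.

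One step of yours is false as stated, though it is locally repairable. You justify $\xi_1,\xi_2\notin\ker q$ by claiming that for $k\ge 4$ the group $\Lambda(0;k,k,k)$ contains no elliptic element of order $3$. That fails whenever $3\mid k$: each canonical generator of order $k$ has a power of order $3$, and $3\mid k$ is not excluded by the lemma (for example $k=21$ satisfies $3^2\nmid 21$ and $7\equiv 1\pmod 3$, so N6 extensions genuinely occur with $3\mid k$). The correct justification is the period formula you invoke in your next sentence, run in the other direction: if $q(\xi_1)=0$, then $\ker q$ acquires three periods equal to $3$, which cannot occur in the signature $(0;k,k,k)$ since $k\ge 4$; similarly for $\xi_2$; and if $q(\xi_3)$ had order $3$ (possible only when $3\mid k$), the kernel would have a single period $k/3$ and, by Riemann--Hurwitz, signature $(1;k/3)$ rather than $(0;k,k,k)$. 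So the conclusion you need is true, but not for the reason you give; with that substitution your proof goes through.
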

\begin{proof}  Since ${\mathbb Z}_k$ is a normal subgroup, $G$ has the semi-direct product structure ${\mathbb Z}_3 \ltimes_{\beta}{\mathbb Z}_k$, where $\beta$ is a (possibly trivial) automorphism of ${\mathbb Z}_k$ of order a divisor of $3$.  In addition, 
 any non-trivial subgroup of $G$, being  a homomorphic image of the group with signature $(0;3,3,k)$, is generated by at most two elements of order $3$.  Let $^pG$ denote a $p$-Sylow subgroup of $G$.   $^3G$ has the structure ${\mathbb Z}_3 \ltimes_{\beta}{\mathbb Z_{3^s}}$, $s \geq 0$, but since it is generated by a most two elements of order $3$, $s \leq 1$.   Hence $3^2 \nmid k
 $.  For a prime divisor $p\ne 3$  of $k$, $^pG$ is cyclic, and $\beta$ induces an automorphism $\beta_p$ on $^pG$.  The subgroup ${\mathbb Z}_3 \ltimes_{\beta_p}(^pG) \leq G$  is  generated by two elements of order $3$ only if  $\beta_p$ is a non-trivial automorphism of order $3$.  This implies that $p \equiv 1 \pmod 3$, and yields the stated necessary conditions on $k$.    If $1$ is the additive generator of $^pG$,  then the element $1 + \beta_p(1) + \beta_p^2(1) \in ^pG$ is fixed  by $\beta_p$ and hence must be the identity (equivalently, $1 + \beta_p(1) + \beta_p^2(1) \equiv 0 \pmod {|^pG|}$).  Now,   ${\mathbb Z}_k$ is  the direct sum of its $p$-Sylow subgroups; hence $\beta = \oplus_p \beta_p$,  where the sum is over the prime divisors of $k$.  ($\beta_p$ is trivial if $p=3$.)  It follows that $\langle \beta \rangle$ is a subgroup of order $3$ in the automorphism group of ${\mathbb Z}_k$ which acts non-trivially on each $p$-Sylow subgroup, $p \ne 3$.  Taking  $1$ as the additive generator of ${\mathbb Z}_k$,  we have $1+\beta(1) + \beta^2(1) \equiv 0 \pmod k$.   Thus $\langle 1, \beta(1), \beta^2(1) \rangle$ is the normal form of a $(0;k,k,k)$ generating vector for a ${\mathbb Z}_k$ action extendable to a $G$ action.
\end{proof}

\begin{lem}[Case N8]\label{L:N8}
 A ${\mathbb Z}_k$ action with signature $(0;k,k,u)$ has an  extension of type {\rm N8} to a $G$ action with signature $(0;2,k,2u)$ if and only if either
 \begin{enumerate} 
 \item $k$ is odd, $G\simeq  {\mathbb Z}_{2k}$,  $u=k$, and the normal form of the ${\mathbb Z}_k$ generating vector is $\langle 1, 1, k-2 \rangle$; 
 \item $k$ is even, $G={\mathbb Z}_2 \oplus {\mathbb Z}_k$,  $u=k/2$,  and the normal form of the ${\mathbb Z}_k$ generating vector is $\langle 1, 1, k-2 \rangle$; 
\item $k \ne 2,4, p^s, 2p^s$, ($p$ an odd prime),  $G \simeq {\mathbb Z}_2\ltimes_{\alpha}  {\mathbb Z}_k$,  (non-abelian, non-dihedral),  where $\alpha$ is an automorphism of  ${\mathbb Z}_k$ of order $2$,  $\alpha(1) \ne -1$, and the normal form of the ${\mathbb Z}_k$ generating vector is $\langle 1, \alpha(1), k-(1+\alpha(1))\rangle$.  
 \end{enumerate}
 \end{lem}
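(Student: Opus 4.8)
The plan is to follow the template of Lemma~\ref{L:N6}: extract the abstract structure of $G$ from the fact that $\Lambda(\sigma)$ is normal of index $2$ in $\Lambda(\sigma')$, then read off the normal form of the $\mathbb{Z}_k$ generating vector by expressing the generators of $\Lambda(\sigma)$ in terms of those of $\Lambda(\sigma')$, and finally convert the surface-kernel (order) requirements into the arithmetic trichotomy on $k$. First I would note that $\rho'(\Lambda(\sigma))=\mathbb{Z}_k$ is normal of index $2$ in $G$, so $|G|=2k$. I would realize the inclusion concretely: the index-$2$ subgroup of $\Lambda(\sigma')$ with signature $(0;k,k,u)$ is the kernel of the homomorphism $\phi:\Lambda(\sigma')\to\mathbb{Z}_2$ sending $\xi_1'\mapsto 1$, $\xi_2'\mapsto 0$, $\xi_3'\mapsto 1$; computing the periods from the action of each $\xi_i'$ on the two cosets confirms that the periods are exactly $k,k,u$ and that this subgroup is the unique one with that signature. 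Since $\phi$ factors through $\rho'$ (indeed $\ker\phi=\Lambda(\sigma)=\rho'^{-1}(\mathbb{Z}_k)$), the surface kernel $M_g=\ker\rho'$ is contained in $\Lambda(\sigma)$, so $\rho=\rho'|_{\Lambda(\sigma)}$ is automatically a surface-kernel epimorphism with the same kernel; this is precisely what makes the $G$-action an extension of the $\mathbb{Z}_k$-action on the same surface. Because $g_1=\rho'(\xi_1')$ is an element of order $2$ lying outside $\mathbb{Z}_k$, the extension splits and $G\simeq\mathbb{Z}_2\ltimes_\alpha\mathbb{Z}_k$, where $\alpha$ is conjugation by $g_1$, an automorphism of $\mathbb{Z}_k$ with $\alpha^2=1$.

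Next I would compute the generating vector. Taking $\eta_1=\xi_2'$, $\eta_2=\xi_1'\xi_2'\xi_1'$ and $\eta_3=(\eta_1\eta_2)^{-1}$ as generators of $\Lambda(\sigma)$ and normalizing $\rho'(\xi_2')$ to the additive generator $1$ of $\mathbb{Z}_k$, the restricted vector becomes $\langle\, 1,\ \alpha(1),\ k-(1+\alpha(1))\,\rangle$, whose last entry has order $u=k/(1+\alpha(1),k)$. The decisive point is that a surface-kernel epimorphism forces this entry to be nontrivial, i.e.\ $1+\alpha(1)\not\equiv 0\pmod k$, equivalently $\alpha(1)\ne -1$. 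This single arithmetic constraint is what rules out the dihedral group $\mathbb{Z}_2\ltimes_{-1}\mathbb{Z}_k$, and I expect it to be the conceptual crux of the argument.

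Finally I would split on the involution $\alpha\in\text{Aut}(\mathbb{Z}_k)\simeq\mathbb{Z}_k^\ast$. If $\alpha=\text{id}$ the vector is $\langle 1,1,k-2\rangle$ with $u=k/(2,k)$, and $G=\mathbb{Z}_2\oplus\mathbb{Z}_k$ is cyclic of order $2k$ with $u=k$ when $k$ is odd (case 1) and noncyclic with $u=k/2$ when $k$ is even (case 2). Otherwise $\alpha$ is a nontrivial involution other than inversion, yielding the non-abelian, non-dihedral group of case 3; such an $\alpha$ exists exactly when $\mathbb{Z}_k^\ast$ contains an element of order $2$ besides $-1$, that is, when $\mathbb{Z}_k^\ast$ is noncyclic, which is the classical condition $k\ne 2,4,p^s,2p^s$. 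For the converse I would check that in each case the data assemble into a genuine $(0;2,k,2u)$-generating vector $\langle (1,0),(0,1),(1,-1)\rangle$ for $G=\mathbb{Z}_2\ltimes_\alpha\mathbb{Z}_k$ (orders $2,k,2u$, product the identity, and generation), whose restriction along $\eta_1,\eta_2,\eta_3$ recovers the stated $\mathbb{Z}_k$ vector. The main obstacle is not any isolated hard step but the bookkeeping: fixing the precise geometric inclusion $\Lambda(\sigma)\hookrightarrow\Lambda(\sigma')$ and tracking element orders carefully enough to pin down $u$ in each branch and to see that $\alpha(1)=-1$ is the unique forbidden value.
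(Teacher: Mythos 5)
Your proposal is correct and follows essentially the same route as the paper: normality of ${\mathbb Z}_k$ in $G$ gives the semidirect structure $G\simeq{\mathbb Z}_2\ltimes_\alpha{\mathbb Z}_k$, the restricted generating vector $\langle 1,\alpha(1),k-(1+\alpha(1))\rangle$ with the exclusion $\alpha(1)\neq -1$ (which is exactly the paper's ``for then $c=0$ and $u=1$'' point), and the trichotomy according to whether $\alpha$ is trivial (abelian cases, split by the parity of $k$) or a non-inversion involution, existing precisely when $k\neq 2,4,p^s,2p^s$. The only differences are matters of explicitness: you construct the inclusion $\Lambda(\sigma)\hookrightarrow\Lambda(\sigma')$ and verify the converse via the vector $\langle(1,0),(0,1),(1,-1)\rangle$ where the paper delegates to \cite{BCC02}, and you compute $u=k/(2,k)$ directly where the paper invokes Maclachlan's lcm condition.
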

 \begin{proof} The group $G$ has the semi-direct product structure ${\mathbb Z}_2 \ltimes_{\alpha}{\mathbb Z}_k$, where $\alpha$ is a (possibly trivial) automorphism of ${\mathbb Z}_k$ of order a divisor of $2$.   If $G$ is non-abelian, $\alpha$ is non-trivial and the generating vector is of the form $\langle 1, \alpha(1), c \rangle$.   $\alpha (1) \ne -1$, for then $c=0$, and $u = 1$.  Such an $\alpha$ exists if and only if $k \ne 2,4, p^s, 2p^s$, ($p$ an odd prime) \cite{IR82}.  $G$ has the (non-dihedral) semi-direct product structure ${\mathbb Z}_2\ltimes_{\alpha}  {\mathbb Z}_k$.  If  $G$ is abelian, $\alpha$ is trivial, and by Maclalchlan's lcm condition \cite{M65}, either $u=k$ and $G \simeq {\mathbb Z}_{2k}$,  or $u=k/2$ and $G={\mathbb Z}_2 \oplus {\mathbb Z}_k$.   
 \end{proof}

\subsection{Cyclic actions on the classical curves}\label{S:classical}

The classical curves admit automorphism groups which are ``large'' with respect to the genus $g$;  in particular, the maximal cyclic subgroups of automorphisms have order at least $\geq 2g+1$.   
The paper \cite{K97} gives a  precise definition of ``large,''  and has   interesting points of contact with our work.   (See, e.g., Section~3 of \cite{K97}, where cyclic group actions with a fixed point are described in terms of  side-pairings of hyperbolic polygons; being one-face maps, the polygons can be viewed as the duals of one-vertex maps in which  side-pairings correspond to dart-pairings on edges.)

The Wiman  curves of genus $g>1$ are
\begin{gather}\notag
w^2 = z^{2g+1}-1 \qquad\text{(type  I)} \\ \notag
w^2=z(z^{2g}-1) \qquad\text{(type  II)} \\ \notag
w^3 = z^4 + 1 \quad(g=3) \qquad\text{(type III)}.
\end{gather}
Their maximal cyclic automorphism groups  are  
\begin{gather}\notag
(w,z) \mapsto (-w, ze^{2\pi i/2g+1})\quad \text{of order $4g+2$}\quad\text{(type I)} \\ \notag
(w,z) \mapsto (we^{\pi i/2g}, ze^{\pi i/g}) \quad \text{of order $4g$}\quad \text{(type II)}\\ \notag
(w,z) \mapsto (we^{2\pi i/3}, ze^{2\pi i/4}) \quad \text{of order $4g=12$} \quad\text{(type III)}.
\end{gather}
  Signatures for the cyclic actions can be deduced by analyzing the ramifications of the branched covering projection   
   $P_z: (w,z) \mapsto z$.    For example, for the Wiman curve of type I,  $P_z: (w,z) \mapsto z$ has degree $4g+2$ with ramification of order $2g+1$ over $z=0$, order $4g+2$ over $z=\infty$, and order $2$ over the $(2g+1)$st  roots of unity.   The unique normal form of a generating vector for ${\mathbb Z}_{4g+2}$ with a $(0;4g+2, 2g+1, 2)$ signature is easily seen to be $\langle 1, 2g, 2g+1\rangle$.  By similar arguments one sees that the signatures of the Type II and III actions are $(0;4g,4g,2)$ and $(0;12,4,3)$, respectively.   Corresponding generating vectors, unique in normal form,  are given in Table~\ref{Ta:largecyclic}.   The type I signature is finitely maximal (it does not appear in Table~\ref{Ta:cyclicadmissible}), so ${\mathbb Z}_{4g+2}$ is the full automorphism group of the type I curve.  The  type II and type III signatures are not:  the cyclic actions extend to larger conformal actions according to cases N8 and T10 of Table~\ref{Ta:cyclicadmissible}, with $k=4g$, $k=4$, respectively.    The full\footnote{If $g=2$,  $SD_{16}$ extends  to an action of $\text{GL}_2({\mathbb Z}_3)$, of order $48$,  with signature $(0; 2,3,8)$ (Case T11 in \cite{BCC02}).} automorphism groups are  
  \begin{gather}\notag
  \text{SD}_{8g} = \langle a,b \mid  a^{4g}=b^2 = 1,\quad b^{-1}ab = a^{2g-1} \rangle\quad\text{acting with signature $(0;2,4g,4)$}\quad\text{(type II)}\\ \notag
  \text{H}_{48} = \langle\text{Kulkarni's group of order $48$}\rangle \quad  \text{acting with signature $(0;2,3,12)$}\quad\text{(type III)}.
  \end{gather}
A presentation of $H_{48}$ is given in \cite{K97}.  

  In the late 1960's R.D.M. Accola \cite{A68} and C. Maclachlan \cite{M69} independently showed that the genus $g$ curve with equation  
    $w^2=z^{2g+2}-1$ 
          has full automorphism group 
           \begin{equation}\notag
   \text{AM}_{8g+8} = \langle a, b \mid a^{2g+2}=b^4=1, \quad (ab)^2=[a,b^2]=1\rangle,  
   \end{equation}
   of order $8g+8$, and that, for infinitely many $g \geq 2$, this curve realizes the maximal order of an automorphism group of a surface of genus $g$.   (A. M. Macbeath had shown in 1961 \cite{M61a} that the Hurwitz bound $84(g-1)$ is not attained for  infinite sequences of genera.)        
       Thus, for every $g \geq 2$, there exists a surface $X_g$ of genus $g$ such that $|\text{Aut}(X_g)| \geq 8g+8$, and the bound is sharp for infinitely many $g$.  
  If  $g \equiv -1 \pmod 4$, 
  there is a second surface of genus $g$, identified by Kulkarni in 1991 \cite{K91}, having full automorphism group of order $8g+8$, acting with the same signature as $\text{AM}_{8g+8}$.    The full\footnote{If $g=3$,  $K_{32}$ extends to an action by a group of order $96$ acting with signature $(0;2,3,8)$ (Case T11 in \cite{BCC02}).}  automorphism group of the Kulkarni surface is
 \begin{equation}\notag
 \text{K}_{8g+8} = \langle a, b \mid a^{2g+2}=b^4=1, \quad (ab)^2=1,\quad b^2ab^2=a^{g+2}\rangle.  
   \end{equation}
  Both $\text{AM}_{8g+8}$ and $\text{K}_{8g+8}$   act with signature $(0;2,4, 2g+2)$ and contain the cyclic subgroup  $\langle a \mid a^{2g+2}=1\rangle$ of index $4$. 
  The distinct normal forms of the  generating vectors for the cyclic actions, given in Table~\ref{Ta:largecyclic}, show that the Accola-Maclachlan and Kulkarni curves, when they exist in the same genus, are distinct.    
 The index $4$ extensions to $\text{AM}_{8g+8}$ and $\text{K}_{8g+8}$ realize Case T9 in Table~\ref{Ta:cyclicadmissible}. 

  The Klein quartic is uniquely determined by the normal form of the generating vector for its maximal cyclic group of automorphisms (${\mathbb Z}_7$) given in Table~\ref{Ta:largecyclic}.  This action has an index $3$ extension (type N6) to the group ${\mathbb Z}_3 \ltimes_\beta {\mathbb Z}_7$, $\beta(1)=2$, and a further extension (type T1) to   $\text{PSL}_2(7)$.  It is worth noting that there is another ${\mathbb Z}_7$ action in genus $3$, with the same signature $(0;7,7,7)$ as the one determining the Klein quartic.  This action has generating vector $\langle 1,1,5 \rangle$ and determines the Wiman curve of type I via an N8 extension, with $u=k=2g+1=7$.   
  
   \begin{table}[h]
\begin{center}
\begin{tabular}{| l | l | l | l | }
\hline
Curve & Group & Signature & Gen. Vector \\
\hline
Wiman type I& $ {\mathbb Z}_{4g+2}$ & $(0; 4g+2, 2g+1, 2)$ & $\langle 1, 2g, 2g+1 \rangle$   \\
Wiman type II & $ {\mathbb Z}_{4g}$ &$ (0; 4g,\  4g,\  2)$ & $\langle 1, 2g-1, 2g \rangle$ \\
Accola-Maclachlan & $ {\mathbb Z}_{2g+2}$ & $(0;2g+2, 2g+2, g+1)$ & $\langle 1,\  1,\  2g \rangle$  \\
Kulkarni & $ {\mathbb Z}_{2g+2}$ & $(0;2g+2, 2g+2, g+1)$ & $\langle 1, g+2, g-1 \rangle$  \\
Wiman type III &  ${\mathbb Z}_{12}$ & $(0; 12,\ 4\ ,3)$ & $\langle 1, \ 3,\  8\rangle$\\
Klein & ${\mathbb Z}_{7}$ & $(0;7,\ 7,\ 7)$ & $\langle 1,\  2,\ 4 \rangle$   \\
\hline
\end{tabular}
\end{center}
\caption{Maximal cyclic actions on the classical curves}\label{Ta:largecyclic}
\end{table}

\section{The classical curves and edge-transitive one-vertex maps}\label{S:largeonevertex}

We have seen that a one-vertex map ${\cal M}$ with $k$ edges has $\text{Aut}({\cal M})=\langle x^p \rangle \leq {\mathbb Z}_{2k}=\langle x \rangle$, where $p$ is a divisor of $2k$.   If ${\cal M}$ is edge-transitive, $\text{Aut}({\cal M})$ must have at least $k$ elements, hence $p=1$ or $p=2$.  If $p=1$, ${\cal M}$ is regular.  If $p=2$, ${\cal M}$ is {\it strictly\,} edge transitive, i.e., edge transitive but not regular.  

The following theorem (stated  in terms of unifacial dessins) was proved in \cite{S01}, and completely characterizes regular one-vertex maps.  

\begin{theorem}\label{T:regular1vertex}  Let $\,{\cal M}$ be a regular, one-vertex map on a surface $X$   of genus   $g>0$.  Then $X$ is the Wiman curve of type I  and ${\cal M}$ has $2g+1$ edges, or $X$ is the Wiman curve of type II and $\,{\cal M}$ has $2g$ edges.  
\end{theorem}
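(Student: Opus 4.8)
The plan is to exploit the very rigid structure of a regular one-vertex map to determine its monodromy completely, compute the induced cyclic action on the canonical Riemann surface, and then match the resulting signature and generating vector against the Wiman entries of Table~\ref{Ta:largecyclic}. Throughout I write $x=(0\,1\,\cdots\,2k-1)$ and work inside $\langle x\rangle\simeq{\mathbb Z}_{2k}$, identifying $x$ with the additive generator $1$.

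First I would pin down $y$. Regularity means $\text{Aut}({\cal M})$ is transitive on all $2k$ darts; since Theorem~\ref{T:MapAuts}(i) gives $\text{Aut}({\cal M})=\langle x^p\rangle$ with orbits of size $2k/p$, transitivity forces $p=1$, so $\text{Aut}({\cal M})=\langle x\rangle\simeq{\mathbb Z}_{2k}$. By the description of $\text{Aut}({\cal M})$ as the centralizer of $G_{\cal M}$, the element $x$ then commutes with $y$, hence $y\in\text{Cent}_{S_{2k}}(x)=\langle x\rangle$; the unique (free) involution in the cyclic group $\langle x\rangle$ is $x^k$, so $y=x^k$. This also recovers the uniqueness of the regular one-vertex map with $k$ edges noted after Corollary~\ref{C:equivcount}, and shows $G_{\cal M}=\langle x,y\rangle=\langle x\rangle$ is cyclic of order $2k$.

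Next I would read off faces, genus and type. Since $y=x^k$ we have $(xy)^{-1}=yx^{-1}=x^{k-1}$, which is a product of $\gcd(k-1,2k)=\gcd(k-1,2)$ cycles. Thus for $k$ odd there are two faces, each of valence $k$, and for $k$ even a single face of valence $2k$; feeding $V=1$, $E=k$ into the Euler-characteristic formula of Section~\ref{S:combinatorics} gives $k=2g+1$ in the odd case and $k=2g$ in the even case, with the parity of $k$ consistent in each. Correspondingly the map has type $(n,r)=(2k,k)$ or $(2k,2k)$. The homomorphism $\theta$ of \eqref{E:theta} sends the triangle-group generators of $\Gamma(n,r)$ to $\theta(\xi_1)=x$, $\theta(\xi_2)=x^k$, $\theta(\xi_3)=x^{k-1}$, of orders $n$, $2$, $r$ respectively; since these match the periods, $\theta$ is a surface-kernel epimorphism onto ${\mathbb Z}_{2k}$ whose kernel $M=M^\ast$ is a torsion-free surface group. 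By the construction of Section~\ref{SS:universalmaps}, $X={\cal U}/M$, and by Lemma~\ref{L:mapaut} the group ${\mathbb Z}_{2k}\simeq\Gamma(n,r)/M$ acts conformally on $X$ with signature $(0;n,2,r)$ and generating vector $\langle 1,k,k-1\rangle$.

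Finally I would identify $X$. Reordering the periods and the generating vector into normal form, the odd case gives signature $(0;4g+2,2g+1,2)$ with vector $\langle 1,2g,2g+1\rangle$, and the even case gives $(0;4g,4g,2)$ with $\langle 1,2g-1,2g\rangle$. These are exactly the Wiman type I and type II rows of Table~\ref{Ta:largecyclic}; since a signature together with its normal-form generating vector determines the surface-kernel epimorphism, hence $M$, hence $X$ up to conformal equivalence, the surface $X$ is the asserted Wiman curve (alternatively, $X$ carries a cyclic action of order $4g+2$ or $4g$, and one may invoke Wiman's uniqueness characterization directly). I expect the only real friction to be the last step's bookkeeping: confirming that $\theta$ is genuinely surface-kernel, so that $X$ has genus $g$ with no orbifold points and the induced cyclic action has the stated signature, and carefully permuting the unreduced vector $\langle 1,k,k-1\rangle$ into the Table~\ref{Ta:largecyclic} normal form while tracking which coordinate carries which order. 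The combinatorial heart---deducing $y=x^k$ and counting faces---is routine.
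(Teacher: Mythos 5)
Your proposal is correct, but there is nothing in the paper to compare it against line-by-line: the paper does not prove Theorem~\ref{T:regular1vertex} at all, it quotes the result from Singerman's paper \cite{S01}. What you have written is therefore a self-contained proof inside the paper's own machinery, and it hangs together: regularity forces $p=1$ in Theorem~\ref{T:MapAuts}, so $\text{Aut}({\cal M})=\langle x\rangle$ and $y\in\text{Cent}_{S_{2k}}(x)=\langle x\rangle$, whence $y=x^k$ (this also re-derives the remark after Corollary~\ref{C:equivcount} that $\nu_1=1$); the face count via $yx^{-1}=x^{k-1}$, which has $\gcd(k-1,2k)=\gcd(k-1,2)$ cycles, and the Euler characteristic then give exactly the two cases $k=2g+1$ and $k=2g$; since $G_{\cal M}=\langle x\rangle$ acts regularly on darts, $G_\delta$ is trivial, so the canonical map subgroup $M$ equals $\ker\theta$, and order-preservation of $\theta$ on the generators makes $M$ a surface group, giving the ${\mathbb Z}_{2k}$ action with signature and normal-form vector matching the Wiman rows of Table~\ref{Ta:largecyclic}. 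Two points deserve to be made explicit. First, your closing step ``signature plus normal-form generating vector determines $X$ up to conformal equivalence'' is true here only because the signatures are triangular, hence rigid (the Fuchsian group $\Lambda(0;n,m,r)$ is unique up to conjugacy in $\text{PSL}(2,{\mathbb R})$); for non-triangular signatures the same data determines a positive-dimensional family, so rigidity is the load-bearing fact and should be named. Second, your parenthetical alternative---``invoke Wiman's uniqueness characterization directly''---is not sufficient as stated for type II when $g=3$: the Wiman type III curve $w^3=z^4+1$ also carries a cyclic group of order $4g=12$, and only the signature distinguishes them ($(0;4g,4g,2)$ versus $(0;12,4,3)$). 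Your main argument handles this correctly; the shortcut does not.
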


 The regular one-vertex maps in genus $1$ are shown in Figure~\ref{F:reggenus1}.  The type I map on the left has two faces, centered at $A$  and $B$, and is the geometrization of the map in Figure~\ref{F:k=3} (b).   It lies on the elliptic curve of modulus $e^{2\pi i/3}$.  The type II map on the right has one face, centered at $A$, and lies on the elliptic curve of modulus $i$.    These curves are the natural genus $1$ analogues of the Wiman curves of type I,\, II\,,  characterized (up to conformal equivalence) by admitting an automorphism of order greater than $2$  ($6$, $4$, respectively) which {\em fix a point}.   Note that the map in Figure~\ref{F:torusmap} also lies on the type I elliptic curve.  

\begin{figure}
\begin{center}
\begin{pspicture}(-.6,-.4)(5.6,2.2)
\SpecialCoor
\degrees[6]

\rput(1;1){\pspolygon[linestyle=dashed](1;1)(1;2)(1;3)(1;4)(1;5)(1;6)}
\psdots(1;1)
\psline[linewidth=1.5 pt, arrows=->](.5,0)(1;1)
\rput(!0 3 sqrt 2 div){\psline[linewidth=1.5 pt, arrows=<-](.5,0)(1;1)}

\rput{1}(1;0)
{
\psline[linewidth=1.5 pt, arrows=->](.5,0)(1;1)
\rput(!0 3 sqrt 2 div){\psline[linewidth=1.5 pt, arrows=<-](.5,0)(1;1)}
}

\rput(1;0){
\rput{2}(1;1)
{
\psline[linewidth=1.5 pt, arrows=->](.5,0)(1;1)
\rput(!0 3 sqrt 2 div){\psline[linewidth=1.5 pt, arrows=<-](.5,0)(1;1)}
}
}
\rput(1;1){\psdots[dotstyle=o,dotscale=1](1;1)}
\rput(1;1){\psdots[dotstyle=o,dotscale=1](1;2)}
\rput(1;1){\psdots[dotstyle=o,dotscale=1](1;3)}
\rput(1;1){\psdots[dotstyle=o,dotscale=1](1;4)}
\rput(1;1){\psdots[dotstyle=o,dotscale=1](1;5)}
\rput(1;1){\psdots[dotstyle=o,dotscale=1](1;6)}
\rput(1;1){\uput[1.5](1;2){A}}
\rput(1;1){\uput[4.5](1;4){A}}
\rput(1;1){\uput[0](1;0){A}}

\rput(1;1){\uput[1.5](1;1){B}}
\rput(1;1){\uput[3](1;3){B}}
\rput(1;1){\uput[4.5](1;5){B}}


\uput{8 pt}[0.8](1;1){\tiny 0}
\uput{8 pt}[1.8](1;1){\tiny 1}
\uput{8 pt}[2.8](1;1){\tiny 2}
\uput{8 pt}[3.8](1;1){\tiny 3}
\uput{8 pt}[4.8](1;1){\tiny 4}
\uput{8 pt}[5.8](1;1){\tiny 5}

\rput(4;0)
{
\rput(1;1){\pspolygon[linestyle=dashed](.8,.8)(-.8,.8)(-.8,-.8)(.8,-.8)}
\rput(1;1){\psdots[dotstyle=o,dotscale=1](.8,.8)(-.8,.8)(-.8,-.8)(.8,-.8)}
\rput(1;1){\uput[1.5](.8,.8){A}}
\rput(1;1){\uput[1.5](-.8,.8){A}}
\rput(1;1){\uput[4.5](-.8,-.8){A}}
\rput(1;1){\uput[4.5](.8,-.8){A}}
\psdots(1;1)
\rput(1;1){\psline[linewidth=1.5 pt, arrows=->](-.8,0)(0,0)}
\rput(1;1){\psline[linewidth=1.5 pt, arrows=<-](0,0)(.8,0)}
\rput(1;1){\psline[linewidth=1.5 pt, arrows=<-](0,0)(0,-.8)}
\rput(1;1){\psline[linewidth=1.5 pt, arrows=<-](0,0)(0,.8)}
\uput{8 pt}[0.3](1;1){\tiny 0}
\uput{8 pt}[1.8](1;1){\tiny 1}
\uput{8 pt}[3.3](1;1){\tiny 2}
\uput{8 pt}[4.8](1;1){\tiny 3}
}

\end{pspicture}
\caption{Regular one-vertex maps in genus $1$}\label{F:reggenus1}
\end{center}
\end{figure}
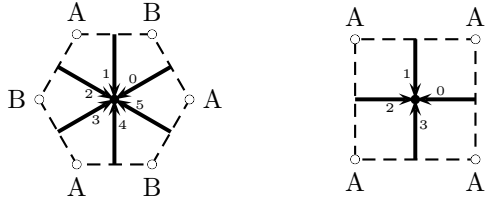

 \subsection{Strictly edge-transitive one-vertex  maps}
  If ${\cal M}$ is strictly edge-transitive with $k$ edges, the edge midpoints are permuted regularly in a $k$-cycle (with trivial isotropy subgroup) and the face centers fall into two orbits,  each having isotropy subgroup of order equal to the corresponding face-valence.   By Wiman's bound on the order of an automorphism  (or an automorphism with a fixed point if $g=1$), $k \leq 4g+2$.  $k=4g+1$ is not possible, by the Riemann-Hurwitz relation.  
  
     \begin{theorem}\label{T:edgetransaction} Let $\,{\cal M}$ be strictly edge-transitive one-vertex map on a surface $X$   of genus   $g\geq 1$.  The action of $\text{Aut}({\cal M})\simeq{\mathbb Z}_k$ has signature and generating vector
\begin{equation}\label{E:edgetransaction}
{\mathbb Z}_k \quad (0;k, l_1, l_2) \quad \langle 1, t, k-(t+1) \rangle, \quad  l_1 = {k}/{(t,k)}, \ l_2 = {k}/{(t+1,k)}, 
\end{equation}
for some $t$,  $0 < t <k/2$, $t\ne (k-1)/2$.  The notation $(\ ,\ )$ indicates the gcd (greatest common divisor) of two integers.

\end{theorem}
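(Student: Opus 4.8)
The plan is to work entirely in the combinatorial model of Section~\ref{S:combinatorics}, reduce the monodromy generators to a one-parameter normal form, and then read the signature and generating vector of the $\mathbb{Z}_k$-action off the cycle structures of $x$ and $yx^{-1}$, invoking the geometric dictionary at the end of Section~\ref{S:conformal} (a map automorphism acts as a rotation about the vertex, an edge-midpoint, or a face-center, and these are its only fixed points).

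First I would normalize the generators. As noted at the start of this section, strict edge-transitivity gives $\text{Aut}(\mathcal{M})=\langle x^2\rangle\simeq\mathbb{Z}_k$; taking $x$ to be the standard $2k$-cycle \eqref{E:standardcycle}, the involution $y$ commutes with $x^2=(0\,2\,\cdots\,2k-2)(1\,3\,\cdots\,2k-1)$. Since $\text{Aut}(\mathcal{M})$ is transitive on the $k$ edges and $|\text{Aut}(\mathcal{M})|=k$, the action on edges is regular, so the $k$ transpositions of $y$ form a single $\langle x^2\rangle$-orbit generated from the edge $(0,\,y(0))$ containing dart $0$. For that orbit to cover all $2k$ darts freely, $y(0)$ must be odd, say $y(0)=2t+1$; this pins down $y=y_t$, the involution pairing $2s\leftrightarrow 2s+(2t+1)$, for a unique $t\in\{0,\dots,k-1\}$.

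Next I would extract the branch data. The unique vertex is fixed by all of $\langle x^2\rangle$, and $x^2$ rotates the $2k$ darts by two clicks, i.e.\ by $2\pi/k$, giving a period $k$ with local rotation number $1$. Regularity on edges means edge-midpoints have trivial stabilizer and contribute nothing. For faces I would compute $y_tx^{-1}$ directly and observe that it preserves parity, acting as $j\mapsto j+t$ on the $k$ odd-index darts and $j\mapsto j-(t+1)$ on the $k$ even-index darts; hence the faces split into two $\langle x^2\rangle$-orbits of sizes $(t,k)$ and $(t+1,k)$ and valencies $l_1=k/(t,k)$, $l_2=k/(t+1,k)$. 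The stabilizer of an odd-index face is $\langle x^{2(t,k)}\rangle$, and the deck transformation $x^{2t}$ both lies in it and permutes that face's darts by $j\mapsto j+t$, i.e.\ advances its bounding circuit by exactly one step; since the face lies to the left of the circuit (the convention of Section~\ref{S:combinatorics}), this is the positive rotation by $2\pi/l_1$, so its rotation number is $t$, and symmetrically the even-index faces give $-(t+1)$. With all three generators taken as positive rotations the relation $\xi_1\xi_2\xi_3=1$ becomes $1+t+(k-(t+1))\equiv0\pmod k$, confirming the generating vector $\langle 1,t,k-(t+1)\rangle$. Finally the quotient genus $h$ is forced to be $0$ by the Riemann-Hurwitz relation: substituting $2g-2=k-1-(t,k)-(t+1,k)$ (from $V=1$, $E=k$, $F=(t,k)+(t+1,k)$) and $k/l_1=(t,k)$, $k/l_2=(t+1,k)$ into $2g-2=k\bigl(2h-2+(1-\tfrac1k)+(1-\tfrac1{l_1})+(1-\tfrac1{l_2})\bigr)$ collapses to $2kh=0$, so $\sigma=(0;k,l_1,l_2)$.

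It remains to settle the range of $t$. The map is regular precisely when $y_t$ commutes with $x$, i.e.\ when $2t+1\equiv0\pmod k$, i.e.\ $t=(k-1)/2$; excluding this enforces strict edge-transitivity. The values $t=0$ and $t=k-1$ force $l_1=1$ or $l_2=1$ and hence $g=0$, contrary to hypothesis. The computation $x^{-1}y_tx=y_{k-1-t}$ shows, via Theorem~\ref{T:MapAuts}(ii) in the case $p=2$, that $y_t$ and $y_{k-1-t}$ determine the same map up to equivalence (merely interchanging the two face-periods), so choosing the smaller representative places $t$ in $0<t<k/2$, $t\ne(k-1)/2$. The step I expect to require the most care is fixing the \emph{orientations} of the three local rotations consistently, so that the generating vector is genuinely $\langle 1,t,k-(t+1)\rangle$ and not some reindexing of it; I would resolve this using the left-of-circuit convention to identify ``advancing the circuit'' (the action of $yx^{-1}$) with the positive rotation, matching the positive vertex rotation $x^2$, and then treat the vanishing sum $1+t-(t+1)=0$ as an internal consistency check on the chosen orientations.
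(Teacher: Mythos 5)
Your proposal is correct and follows essentially the same route as the paper's proof: reduce $y$ to the shift-parameter form $2s \leftrightarrow 2s+2t+1$ using commutation with $x^2$, compute the cycle lengths of $yx^{-1}$ on odd and even darts to get the two face orbits with periods $l_1=k/(t,k)$, $l_2=k/(t+1,k)$, and use the conjugation $x^{-1}yx=y_{k-(t+1)}$ together with Theorem~\ref{T:MapAuts} to restrict to $0<t<k/2$, $t\ne(k-1)/2$, excluding $t=0$ (genus $0$) and $t=(k-1)/2$ (regular). The only notable difference is that where the paper obtains the generating vector $\langle 1, t, k-(t+1)\rangle$ by citing the lcm condition and the normal form \eqref{E:ZnGenVector}, you verify it directly from the local rotation numbers of $x^2$, $x^{2t}$ and $x^{-2(t+1)}$ at the vertex and the two face centers (and get the quotient genus $0$ by Riemann--Hurwitz), which makes explicit a step the paper treats somewhat briskly.
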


\begin{proof}  In the monodromy group $G_{\cal M}=\langle x, y \rangle$, the free involution $y$ commutes with $x^2$ but not with $x$, hence it is obtained from a pairing of the two $k$-cycles 
$$C_0 = (0\  2\  \dots\ 2k-2) \qquad\text{and}\qquad C_1=(1\ 3\ \dots\ 2k-1)$$
with a shift parameter $t$, $0 \leq t \leq k-1$ (cf. Section~\ref{SS:prescribedcommuters}).  If $k$ is odd, $t = (k-1)/2$ is excluded, since in that case, $y=x^k$ and the map would be  regular.  The permutation $(xy)^{-1}=yx^{-1}$ consists of cycles of at most two different lengths.  If  a cycle of $yx^{-1}$ contains an odd symbol $2c+1$, then 
$$2c+1 \overset{x^{-1}}{\quad\mapsto\quad}2c \overset{y}{\quad\mapsto\quad}2c + 2t + 1, $$
which implies that the cycle has length $l_1$ where $l_1$ is the minimal positive integer for which
$ 2l_1t \equiv 0 \pmod {2k}.$  
It follows that 
$ l_1 = {k}/{(t,k)}.$    Similarly, if a cycle contains an even symbol $2c$, then 
$$2c \overset{x^{-1}}{\quad\mapsto\quad}2c-1 \overset{y}{\quad\mapsto\quad}2c -2- 2t,$$
which implies that the cycle has length $l_2$, where $l_2$ is the minimal positive integer such that $-l_2(2+2t) \equiv 0 \pmod {2k}$.  This yields
$ l_2 = {k}/{(t+1, k)}.$  
It follows that $(xy)^{-1}$ is a product of $(t,k)$ cycles of length $l_1$ and $(t+1, k)$ cycles of length $l_2$.           The single vertex of ${\cal M}$ is fixed by $\text{Aut}({\cal M})$.  The edge midpoints are permuted regularly in a $k$-cycle, and the centers of the two face types are permuted in orbits of lengths $(t,k)$ and $(t+1, k)$, with isotropy subgroups of orders $l_1$ and $l_2$, respectively.  It follows that $\text{Aut}({\cal M})$ acts with signature $(0;k, l_1, l_2)$.   By Theorem~\ref{T:MapAuts}, $\langle x, y \rangle$ is strongly conjugate to $\langle x, y_1\rangle$, where $y_1=x^{-1}yx$.  Moreover $y_1$ is obtained from $y$ by replacing the shift parameter $t$ with the shift parameter $k-(t+1)$.  We note that since $(t+1,k) = (k-(t+1),k)$, the signature of the ${\mathbb Z}_k$ action is not changed if $t$ is replaced by $k-(t+1)$.   It follows that up to map equivalence, we may assume $t<k/2$.     If $t=0$,  the signature reduces to $(0;k,k)$ which places the map (and the ${\mathbb Z}_k$ action) on   the Riemann sphere.    The lcm of $l_1$ and $l_2$ is $k$, so a $(k, l_1, l_2)$-generating vector for ${\mathbb Z}_k$ exists and may be taken in the normal form $\langle 1, t, k-(t+1)\rangle$ (cf. \eqref{E:ZnGenVector}).  
 \end{proof}

In most cases, if a curve $X$ of genus $g>1$ has  a strictly edge-transitive, one-vertex map ${\cal M}$, then $\text{Aut}({\,\cal M})=\text{Aut}(\,X)$.  The exceptions are characterized in our final theorem, below.   If $\text{Aut}{\cal M} = \text{Aut}(\,X)$, an equation for $X$ is $w^k=z^{k/l_1}(z-1)^{k/ l_2}$, where $\text{Aut}({\cal M})\simeq {\mathbb Z}_k$  is generated by $(w,z) \mapsto (w, e^{2\pi i/k}z)$.

\begin{theorem}\label{T:main} Let ${\cal M\,}$ be a strictly edge-transitive one-vertex map with $k$ edges, where $\text{Aut}(\,{\cal M})={\mathbb Z}_k$ has signature and generating vector \eqref{E:edgetransaction}.  Let $X$ be the  canonical Riemann surface of genus $g>1$.     If $\text{Aut}(\,X) >  \text{Aut}(\,{\cal M})$,   then 
\begin{enumerate}
\item 
 $k=12$, $t=3$, and $X$ is the Wiman type III  curve;
\item 
$k=2g+1$, $t=1$, and $X$ is the Wiman type I curve;
\item 
 $k=2g+2$, $t=1$, and $X$ is the Accola-Maclachlan curve;
\item 
 $k=2g+1$, $t=\beta(1)$,   and $\text{Aut}(X) \simeq {\mathbb Z}_3\ltimes_{\beta}  {\mathbb Z}_k$ as in Lemma~\ref{L:N6}; except
\begin{itemize}
\item 
if $k=7$, $t=\beta(1)=2$,  $X$ is the Klein quartic.
\end{itemize}
\item 
 $2g+2\leq k\leq 4g$, $t=\alpha(1)$,  and $\text{Aut}(X)\simeq {\mathbb Z}_2 \ltimes_{\alpha} {\mathbb Z}_k$ as in Lemma~\ref{L:N8}; except
 \begin{itemize}
   \item 
     if $k=2g+2$, $g \equiv -1 \pmod 4$,  $\alpha(1)=g+2$,  $X$ is the Kulkarni curve; or  
   \item 
    if $k=12$ or $24$, $g=4$ or $10$, $\alpha(1)=7$ or $19$ (resp.),   $\text{Aut}(X)$ contains $ {\mathbb Z}_2 \ltimes_{\alpha} {\mathbb Z}_k$ with index $3$. 
    \end{itemize}
\end{enumerate}
In case 5, with $k=4g$ and $\alpha(1) = 2g-1$, ${\mathbb Z}_2 \ltimes_{\alpha} {\mathbb Z}_k \simeq \text{SD}_{8g}$ and $X$ is the Wiman curve of type II.
 \end{theorem}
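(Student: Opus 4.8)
The plan is to recast the hypothesis $\text{Aut}(X) > \text{Aut}(\mathcal{M})$ as a statement about signature extension and then read off the answer from Table~\ref{Ta:cyclicadmissible}. By Theorem~\ref{T:edgetransaction}, $\text{Aut}(\mathcal{M}) \simeq \mathbb{Z}_k$ acts on $X$ with triangle signature $\sigma = (0;k,l_1,l_2)$, where $l_1 = k/(t,k)$ and $l_2 = k/(t+1,k)$, and with generating vector $\langle 1, t, k-(t+1)\rangle$. By Lemma~\ref{L:mapaut} we have $\text{Aut}(\mathcal{M}) \le \text{Aut}(X)$, and $\text{Aut}(X) > \text{Aut}(\mathcal{M})$ holds exactly when this $\mathbb{Z}_k$-action extends to a strictly larger conformal action on the same surface --- equivalently, when the embedded triangle group $\Lambda(\sigma)$ is not finitely maximal. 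Since $\sigma$ is triangular and, by the very existence of the generating vector, cyclic-admissible, the completeness of Singerman's list \cite{S72} (restricted to cyclic-admissible triangle signatures and recorded in Table~\ref{Ta:cyclicadmissible}) forces $\sigma$ to be the first signature of one of the rows N6, N8, T1, T4, T8, T9, T10.

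First I would sort the rows by the configuration of $(l_1,l_2)$. In each row the first period equals the order of the cyclic group, so I align $k$ with it and classify by how many of $l_1,l_2$ equal $k$, i.e.\ by which of $(t,k),(t+1,k)$ equal $1$. The Riemann--Hurwitz relation $2g = k+1-(t,k)-(t+1,k)$ (from $2g-2 = k\,\mu(\sigma)$ with $\mu(\sigma)=1-\tfrac1k-\tfrac1{l_1}-\tfrac1{l_2}$) then bounds $k$ in terms of $g$ in each regime: $l_1=l_2=k$ forces $k=2g+1$ (the N6 regime); exactly one of them equal to $k$ forces $2g+2\le k\le 4g$, with $k=4g$ precisely when the remaining period is $2$ (the N8 regime); and the remaining configuration $\{l_1,l_2\}=\{k/3,3\}$ is the T10 row.

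Next, for each regime I would invoke the relevant structural result to pin down $t$ and $\text{Aut}(X)$. In the N6 regime, Lemma~\ref{L:N6} shows the generating vector must be $\langle 1,\beta(1),\beta^2(1)\rangle$ for an order-$3$ automorphism $\beta$ of $\mathbb{Z}_k$, giving $t=\beta(1)$, $\text{Aut}(X)\simeq\mathbb{Z}_3\ltimes_\beta\mathbb{Z}_k$, and the further T1 extension to $\text{PSL}_2(7)$ at $k=7$ (case~4, with the Klein quartic). The same signature $(0;k,k,k)$ also carries the abelian, $k$-odd case of Lemma~\ref{L:N8} when $t=1$, yielding $\mathbb{Z}_{2k}$ and the Wiman curve of type I (case~2). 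In the N8 regime, the normal forms of Lemma~\ref{L:N8} match $\langle 1,t,k-(t+1)\rangle$ only for $t=1$ (abelian) or $t=\alpha(1)$ (non-abelian): the $k$-even abelian case $t=1$, $k=2g+2$, further extends via T9 to $\text{AM}_{8g+8}$, the Accola--Maclachlan curve (case~3), while the non-abelian case $\text{Aut}(X)\simeq\mathbb{Z}_2\ltimes_\alpha\mathbb{Z}_k$ is case~5, which includes $\text{SD}_{8g}$ and the Wiman curve of type II at $k=4g$, $\alpha(1)=2g-1$. Finally the T10 row gives $k=12$, $t=3$, and the Wiman curve of type III (case~1). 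In each instance I would read off the curve from the equation $w^k=z^{k/l_1}(z-1)^{k/l_2}$ and use the uniqueness of the normal-form generating vectors (Section~\ref{S:extendability}, Table~\ref{Ta:largecyclic}) to identify $X$.

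The main obstacle is that the rows of Table~\ref{Ta:cyclicadmissible} overlap as signatures: $(0;k,k,k)$ is simultaneously the N6 signature and the N8 signature with $u=k$; $(0;2g+2,2g+2,g+1)$ is both N8 (with $u=g+1$) and T9; and $(0;8,8,4)$ is at once T4, T9, and N8. A given signature therefore admits several potential extensions, and it is the value of $t$ --- through the incompatible normal-form requirements of Lemmas~\ref{L:N6} and~\ref{L:N8} --- rather than $\sigma$ alone that selects which extension actually occurs on $X$. The delicate part is thus to show that each admissible $t$ produces exactly one first extension, and then to detect the further, non-normal extensions (the T-rows), which by the analysis in \cite{BCC02} arise only sporadically: T1 enlarges the Klein case to $\text{PSL}_2(7)$, T9 enlarges the $k=2g+2$ case to the Kulkarni group when $g\equiv -1 \pmod 4$ and $\alpha(1)=g+2$, and T4/T8 enlarge the $k=12,24$ cases in genus $4,10$. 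Exhaustiveness of the five-case list rests entirely on the completeness of the table for cyclic-admissible triangle signatures.
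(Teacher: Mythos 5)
Your proposal is correct and follows essentially the same route as the paper's own proof: reduce the hypothesis $\text{Aut}(X) > \text{Aut}({\cal M})$ to non-maximality of the signature \eqref{E:edgetransaction}, match it against Table~\ref{Ta:cyclicadmissible}, pin down $k$ versus $g$ via Riemann--Hurwitz in each row, and then use Lemmas~\ref{L:N6} and~\ref{L:N8} together with the normal forms of Table~\ref{Ta:largecyclic} to identify $t$, $\text{Aut}(X)$, and the curve, with the sporadic T-extensions (citing \cite{BCC02}) accounting for the exceptional subcases. Your explicit handling of the overlapping signatures, such as $(0;k,k,k)$ being simultaneously N6 and N8 with $u=k$, is if anything more careful than the paper's terse treatment; the only slip is attributing the $k=12,24$ exceptions to T4/T8, whereas both come from T8 (T4 is relevant only to the genus-$3$ Kulkarni case).
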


 \begin{proof}  If $\text{Aut}({\,X}) > \text{Aut}({\,\cal M})$,  the signature in \eqref{E:edgetransaction} must coincide with one of the cyclic-admissible signatures $\sigma$ in Table~\ref{Ta:cyclicadmissible}.  For an extension of type N6, by  the Riemann-Hurwitz relation, $k=2g+1$; if $g=3$, the N6 extension is subsumed by the T1 extension, which yields the Klein quartic.  For an extension of type N8, by the Riemann-Hurwitz relation, $k= 2g(u/u-1)$.  For an abelian extension, there are two possibilities:  (i) $u=k/2$ (equivalently, $k=2g+2$) and $t=1$; or (ii) $u=k$ (equivalently, $k=2g+1$) and $t=1$.  In the first case we obtain the ${\mathbb Z}_k$ action (see Table~\ref{Ta:largecyclic})  which determines the Accola-Maclachlan curve.   (The N8 extension to ${\mathbb Z}_2 \oplus {\mathbb Z}_{2g+2}$ is subsumed by the T9 extension.)   In the second case, we obtain the cyclic action which determines the Wiman curve of type I, extending to  ${\mathbb Z}_{4g+2}$.   For a non-abelian extension of type N8, $u \leq k/2$ (otherwise the extension would be cyclic of order $2k$), hence $2g+2 \leq k \leq 4g$.   Both the lower and upper bounds, which occur when $l_2=k/2$, $l_2 = 2$, respectively, satisfy the necessary conditions on $k$ given in Lemma~\ref{L:N8} for a nonabelian, non-dihedral extension to ${\mathbb Z}_2 \ltimes_{\alpha} {\mathbb Z}_k$.  
    If $g \equiv -1 \pmod 4$, $\alpha(1)=g+2$ determines an automorphism of ${\mathbb Z}_{2g+2}$ of order $2$ since $(g+2)^2 \equiv g^2 \equiv 1 \pmod{2g+2}$.  Taking $k=2g+2$, we have  $k-(1 + \alpha(1)) = g-1$ which has has order $g+1$ (mod $k$) since $(g+1)(g-1)=(2g+2)(g-1)/2  \equiv 0 \pmod k.$  This yields the ${\mathbb Z}_k$ action which determines the Kulkarni surface.  (The N8 extension is subsumed by the T9 extension, or by the T4 extension if $g=3$).  In the last two exceptional cases ($k=12$, $24$) the N8 extension is subsumed by the T8 extension.  The Wiman type III curve arises from the T10 extension. 
 \end{proof}

Subsets of the curves mentioned in Theorem~\ref{T:main} have been characterized in ways related to our characterization.   A. Meleko\u glu and D. Singerman (\cite{MS08}, Section~6) characterized 
the Wiman curves of type I and II, and the Accola-Maclachlan curves  (all of which are hyperelliptic)  as the unique curves of genus $g>1$  admitting {\it  double-star maps}.  These are two-sheeted covers (via the hyperelliptic involution) of a one-vertex map on the sphere having only free edges (carrying a single dart).  We note that  double-star maps are the ``exceptional'' ribbon graphs of \cite{MP98} (Definition 1.9).  Meleko\u glu and  Singerman (\cite{MS08}, Theorems~8.2 and 8.3) also characterized the  Accola-Maclachlan  and Wiman type II curves  as the unique Platonic $M$- and $(M-1)$-surfaces, respectively.  These are curves of genus $g>1$ admitting an anti-conformal involution with the maximal and second-maximal ($g+1$, resp., $g$) number of fixed simple closed curves.    D. Singerman (\cite{S01}, Theorem~6.2) classified the surfaces admitting {\it uniform} strictly edge-transitive one-vertex maps; this is the special case $l_1=l_2$ in our Theorem~\ref{T:edgetransaction}.



\end{document}